\numberwithin{equation}{section}
\newtheorem{thm}{Theorem}[section]
\newtheorem{lma}[thm]{Lemma}
\newtheorem{defn}[thm]{Definition}
\newtheorem{prop}[thm]{Proposition}
\newtheorem{ques}[thm]{Question}
\renewcommand{\ge}{\geqslant}
\renewcommand{\le}{\leqslant}
\renewcommand{\geq}{\geqslant}
\renewcommand{\leq}{\leqslant}
\renewcommand{\H}{\text{H}}
\title{Assouad type dimensions for self-affine sponges}
\author{Jonathan M. Fraser \& Douglas C. Howroyd\\ \\
\emph{School of Mathematics,}\\ \emph{ Alan Turing Building,}\\ \emph{University of Manchester,}\\ \emph{ Manchester M13 9PL, UK}\\ \\
\emph{E-mail contact}:\\ jon.fraser32@gmail.com\\  douglas.howroyd@gmail.com }
\begin{document}

\maketitle

\begin{abstract}
We study the Assouad and lower dimensions of self-affine sponges; the higher dimensional analogue of the planar self-affine carpets of Bedford and McMullen.  Our techniques involve the weak tangents of Mackay and Tyson as well as regularity properties of doubling measures in the context studied by Bylund and Gudayol. \\

\emph{Mathematics Subject Classification} 2010:  primary: 28A80; secondary: 28A78, 37C45, 28C15.

\emph{Key words and phrases}: Assouad dimension, lower dimension, self-affine set, doubling measure, weak tangent.
\end{abstract}

\section{Introduction} \label{intro}

The Assouad and lower dimensions are increasingly popular notions of dimension used to study the irregularity of fractal sets.  In this paper we compute these dimensions for a certain class of self-affine sets in $\mathbb{R}^d$ ($d\in \mathbb{N}$), which we refer to as \emph{self-affine sponges}, following Kenyon and Peres \cite{kenyonperes}. These are the natural higher dimensional analogue of the planar self-affine carpets of Bedford \cite{bedford} and McMullen \cite{mcmullen}, which have been extensively studied since their inception in the mid 1980s.  The Assouad dimension of the planar \emph{carpets} was first computed by Mackay \cite{mackay} and the lower dimension was computed later by Fraser \cite{fraser}.  Our work can be seen as a natural extension of these papers to the higher dimensional case.  We wish to point out at this stage that this extension is not straightforward and we have to introduce several new ideas in our proofs.  The dimension formulae themselves are also not immediate generalisations of the formulae in the planar case due to the fact that `dimension' has to be maximised in each of the last $(d-1)$ coordinates independently.  This will become clear upon inspection and comparison of our results.

There are two main techniques we employ.  The first is used to bound the Assouad dimension from above and the lower dimension from below and is based on a useful measure theoretic formulation of these two dimensions due to Konyagin and Vol'berg \cite{konyagin} and Bylund and Gudayol \cite{bylund}.  This replaces the delicate covering arguments used by Mackay and Fraser \cite{mackay, fraser} and seems to us more appropriate to handle the higher dimensional case. For this part of the proof we will somewhat rely on Olsen's treatment of a natural family of measures supported on sponges \cite{sponges}.  The second technique is used to bound the lower dimension from above and the Assouad dimension from below and is based on finding appropriate \emph{weak tangents} to our sets. This approach was pioneered by Mackay and Tyson \cite{mackaytyson} and used by Mackay and Fraser in the two dimensional case \cite{mackay, fraser}.

The measure theoretic formulation of the Assouad and lower dimensions asks for measures with certain scaling properties and leads to the natural question of when there are `sharp' measures, i.e., particular measures with the precise scaling property, rather than a collection of measures approximating the scaling property.  We show that such sharp measures do exist (and can be taken to be Bernoulli) if the \emph{very strong separation condition} of Olsen \cite{sponges} is satisfied.  We also point out that there are sponges in our class without this condition for which \emph{all} Bernoulli measures fail to be doubling, and hence cannot be sharp. This follows from \cite{doublingcarpets}, but we provide a simple example for completeness.  

Also in our Examples section (Section \ref{qanda}), we  provide a smooth 1-parameter family of self-affine carpets for which the Assouad and lower dimensions exhibit a new kind of discontinuity and show that for sponges in our class either the Assouad, box, Hausdorff and lower dimensions are all equal or all distinct, extending a dichotomy of Mackay \cite{mackay} to the higher dimensional setting.

\subsection{Assouad dimension and lower dimension} \label{dimension}

Throughout this section let $F \subseteq \mathbb{R}^d$ be a non-empty compact bounded set.  The Assouad dimension of $F$ is defined by 
\begin{multline*}
\dim_{\text{A}} F = \inf \Bigg\{ s \geq 0 \, \,  : \, \exists \text{ constants } C,\rho > 0 \text{ such that, for all } \, \, 0< r< R \leq \rho,\\ \text{ we have  }\sup_{x\in F} N_r (B(x,R)\cap F) \leq C\left(\frac{R}{r}\right)^{s} \Bigg\}\end{multline*}
where $B(x,R)$ means open ball of radius $R$ and centre $x$, $N_r(E)$ is the smallest number of open sets in $\mathbb{R}^d$ with diameter less than or equal to $r$ required to cover a bounded set $E$. We are also interested in the following measure theoretic formulation of the Assouad dimension due to \cite{luksak, konyagin}: \newpage
\begin{multline*} 
\dim_{\text{A}} F = \inf \Bigg\{ s \geq 0 \, \,  : \, \exists \text{ a Borel probability measure } \mu \text{ fully supported by }F \\  \text{ and constants }C,\rho > 0\text{  such that,} \\ \text{for all } \, \, 0< r< R \leq \rho, \text{ we have  }  \sup_{x\in F} \frac{\mu(B(x,R))}{\mu(B(x,r))} \leq C\left(\frac{R}{r}\right)^{s} \Bigg\}.
\end{multline*}

Assouad dimension has a natural dual that we will call the lower dimension $\dim_\text{L} F$, following Bylund and Gudayol \cite{bylund}. This notion was first introduced by Larman \cite{larman}, where it was refereed to as the \emph{minimal dimensional number}.  We will have two definitions for the lower dimension, both duals of the respective definitions for the Assouad dimension, which emphasises the link between the two dimensions. First, the definition involving covers is
\begin{multline*}
\dim_{\text{L}} F = \sup \Bigg\{ s \geq 0 \, \,  : \, \exists \text{ constants }C, \rho > 0 \text{ such that, for all } \, \, 0< r< R \leq \rho,\\ \text{ we have  }\inf_{x\in F} N_r (B(x,R)\cap F) \geq C\left(\frac{R}{r}\right)^{s} \Bigg\}.
\end{multline*}

Bylund and Gudayol \cite{bylund} proved a result linking doubling measures and lower dimension, similar to \cite{luksak, konyagin} (see below for definition of doubling measure):
\begin{multline*}
\dim_{\text{L}} F = \sup \Bigg\{ s \geq 0 \, \,  : \, \exists \text{ a doubling Borel probability measure } \mu \text{ fully supported on }F \\ \text{ and constants }C, \rho > 0  \text{ such that, } \\ \text{for all } \, \, 0< r< R \leq \rho, \text{ we have  }\inf_{x\in F} \frac{\mu(B(x,R))}{\mu(B(x,r))} \geq C\left(\frac{R}{r}\right)^{s} \Bigg\}.
\end{multline*}

For a more in-depth discussion of the Assouad and lower dimension and for some of their basic properties, we refer the reader to \cite{robinson, luk, fraser}. Assouad and lower dimension provide coarse and extremal information about the geometric structure of a set.  In particular, they identify the parts of the set which are `thickest' and `thinnest' respectively and unsurprisingly provide upper and lower bounds for other commonly used notions of dimension, which describe more of an `average thickness'.  We will often refer to Hausdorff and upper and lower box dimensions, which we denote by $\dim_\H$,  $\overline{\dim}_\text{B}$ and $\underline{\dim}_\text{B}$ respectively, and refer the reader to \cite[Chapters 2--3]{falconer} for their definitions and basic properties.  In general, for a compact set $F$, we have 
\[
\dim_\text{L} F  \ \leq \  \dim_\H F\ \leq \  \underline{\dim}_\text{B} F  \ \leq \  \overline{\dim}_\text{B} F \ \leq \  \dim_\text{A} F,
\]
where the first inequality is due to Larman \cite[Theorem 5]{larman}.  If the upper and lower box dimensions coincide, we will denote the common value by $\dim_\text{B} F$ and refer to it as the box dimension.  This will be the case for the self-affine sets considered in this paper. Of course, the measure theoretic formulations of the Assouad and lower dimensions do not guarantee the existence of `sharp' measures, i.e., measures satisfying the required scaling property with $s$ actually \emph{equal} to $\dim_{\text{A}} F$ or $\dim_{\text{L}} F$.  We are interested in the general question of which sets carry sharp measures and what form they take.  This question was mentioned explicitly in the context of self-affine sets in \cite[Question 4.4]{fraser}.  We provide a partial solution to this question here.

Note that any measures $\mu$ used in the measure theoretic definition of Assouad dimension above must be doubling.  Recall that a measure is \emph{doubling} if there exists a constant $C>0$ such that for all $x$ in the support of $\mu$ and all $r>0$ we have
\[
\frac{\mu(B(x,2r))}{\mu(B(x,r))}  \ \leq \ C.
\]
It is well-known that if a measure is doubling, then for any $c>1$, there exists a constant $C'>0$ such that for all $x$ in the support of $\mu$ and all $r>0$ we have
\[
\frac{\mu(B(x,cr))}{\mu(B(x,r))}  \ \leq \ C'.
\]

\subsection{Self-affine sponges} \label{sponges}

Self-affine carpets have been investigated in depth over the last 30 years, partially because they provide good examples of self-affine sets that can be visualised easily and studied explicitly. Roughly speaking, a \emph{carpet} refers to a self-affine set in the plane which is the attractor of an iterated function system consisting of affine maps corresponding to diagonal matrices (or at least matrices which map the coordinate axes onto themselves).  There exist several classes of self-affine carpets of which the Bedford--McMullen class are the oldest and simplest. Specific attention has been paid to dimension theoretic properties and we now have formulae for the Hausdorff, box, packing, Assouad and lower dimensions in certain instances, see \cite{baranski, bedford, fengaffine, lalley-gatz, mcmullen, fraser_box, fraser, mackay}. Kenyon and Peres \cite{kenyonperes} first studied the higher dimensional analogue of the planar Bedford--McMullen carpets, which they called \emph{sponges}, and calculated the Hausdorff and box dimensions.  Olsen \cite{sponges}  considered the same model and studied the multifractal structure of the corresponding Bernoulli measures. In general, much less is known in the higher dimensional setting of sponges.

We follow the notation set up by Olsen \cite{sponges}.  The \emph{Bedford--McMullen sponges} are defined as follows. Let $d\in \mathbb{N}$ and, for all $l=1, \ldots, d$, choose $n_l \in \mathbb{N}$ such that $1 < n_1 < n_2 < \cdots < n_d$.  Note that the construction is still valid if some of the inequalities between the $n_l$ are actually equalities, but surprisingly this causes some problems in our proofs, see Section \ref{examplenonstrict}.   Let $\mathcal{I}_l=\left\{0,\ldots, n_l-1 \right\}$ and $\mathcal{I}=\prod_{l=1}^{d}\mathcal{I}_l$ and consider a fixed digit set $D \subseteq \mathcal{I}$ with at least two elements.  For $\textbf{i} =\left( i_1,\ldots, i_d\right)\in D$ we define the affine contraction $S_\textbf{i}\colon [0,1]^d \rightarrow [0,1]^d$ by
\[
S_{\textbf{i}}\left(x_1, \ldots, x_d \right) = \left( \frac{x_1+i_1}{n_1},\ldots,\frac{x_d+i_d}{n_d} \right) .
\]
Using a theorem of Hutchinson $\cite{hutch}$, there exists a unique non-empty compact set $K \subseteq [0,1]^d$ satisfying 
\[
K=\bigcup_{\textbf{i}\in D}S_{\textbf{i}}(K). 
\]
called the attractor of the iterated function system (IFS) $\ \left\{ S_{\textbf{i}} \right\}_{\textbf{i}\in D}$.  The set $K$ is the \emph{self-affine sponge} and our main object of study. It is worth noting that if all our $n_l$ are equal then our set is self-similar satisfying the open set condition and the Assouad dimension equals the similarity dimension, see for example \cite{olsenassouad}, and if $d=2$ then we are in the self-affine carpet setting of Bedford-McMullen. We will assume without loss of generality that $K$ does not lie in a hyperplane.  If this was the case, we could restrict our attention to the minimal lower hyperplane containing $K$ and consider it as a self-affine sponge in this space.

We will often model our sponge $K$ via the symbolic space $D^{\mathbb{N}}$, which consists of all infinite words over $D$ and is equipped with the product topology generated by the cylinders corresponding to finite words over $D$.  We define a continuous (but not necessarily injective) map $\tau :D^{\mathbb{N}} \rightarrow [0,1]^d$ by
\[
\{\tau(\omega)\}=\bigcap_{n \in \mathbb{N}} S_{\omega\vert n}([0,1]^d)
\]
where $\omega= (\textbf{i}_1, \textbf{i}_2,\ldots)$, $\omega \vert n=\left( \textbf{i}_1, \ldots , \textbf{i}_n \right) \in D^n$ and $\textbf{i}_j=(i_{j,1},\ldots,i_{j,d})$ for any $j\in \mathbb{N}$, and $S_{\omega\vert n} = S_{\left( \textbf{i}_1,  \ldots ,\textbf{i}_n  \right) } =  S_{ \textbf{i}_1} \circ \cdots  \circ S_{ \textbf{i}_n} $.

This allows us to switch between symbolic notation and geometric notation since
\[
\tau(D^{\mathbb{N}})=K.
\]
\emph{Pre-fractals} of an attractor $F$ (for a given IFS) are sets that are defined by the application of all possible combinations of functions in our IFS to an initial set a certain number of times (this number will determine the level of the pre-fractal). As the level tends to infinity, the pre-fractals will converge to our attractor in the Hausdorff metric, regardless of our initial set; for more details see \cite[page 126]{falconer}. For us, the $n$th pre-fractal in the construction of $K$ is
\[
\bigcup_{\left( \textbf{i}_1,  \ldots ,\textbf{i}_n  \right)\in D^{n} } S_{\left( \textbf{i}_1,  \ldots ,\textbf{i}_n  \right) }([0,1]^d),
\]
which is a collection of $\lvert D \rvert^n$ hypercuboids, and these help to visualise the set $K$ itself.  

\begin{figure}[htbp]
	\centering
    \includegraphics[width=150mm]{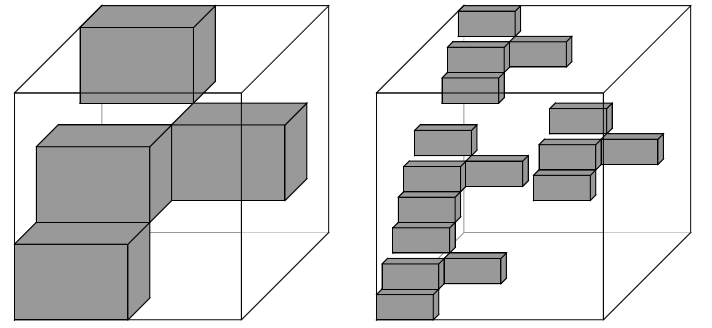}
	\caption[Simple Sponge]{The first and second level in the construction of a particular self-affine sponge in $\mathbb{R}^3$ where $n_1=2$, $n_2=3$, $n_3=4$ and $D = \{(0,0,0), (0,1,1), (0,2,3), (1,1,2) \}$.}
	\label{fig:Self-affine sponge}
\end{figure}

A commonly considered and important class of Borel measures supported on fractal attractors are \emph{Bernoulli measures}.  Associate a probability vector $\ \left\{ p_{\textbf{i}} \right\}_{\textbf{i}\in D}$ with $D$ and let $\tilde{\mu}=\prod_{\mathbb{N}}\left( \sum_{\textbf{i}\in D}p_{\textbf{i}}\delta_{\textbf{i}} \right)$  be the natural Borel product probability measure on $D^{\mathbb{N}}$, where $\delta_{\textbf{i}}$ is the Dirac measure on $D$ concentrated at $\textbf{i}$.  Finally, the measure 
\[
\mu(A)=\tilde{\mu}\circ \tau^{-1}(A)
\]
for a Borel set $A\subseteq K$, is our Bernoulli measure supported on $K$.  We will be interested in one particular Bernoulli measure, which we now describe.  For any $l=1,2,\ldots,d$ we define $\pi_l \, : \, D \rightarrow \prod_{k=1}^l \mathcal{I}_k$ to be the projection onto the first $l$ coordinates, i.e., $\pi_l(i_1,\ldots,i_d)=(i_1,\ldots, i_l)$ and let $D_l=\pi_l (D)$ and $N=\#\{\pi_1 D\}$. Finally, for $l=1, \dots, d-1$ and $(i_1, \dots , i_l) \in D_l$ let
\[
N(i_1, \ldots, i_l)= \# \{ i_{l+1} \in \mathcal{I}_{l+1} : (i_1, \ldots, i_l, i_{l+1}) \in D_{l+1}\}
\]
 be the number of possible ways to choose the next digit of $(i_1, \dots , i_l)$. In particular, $N(i_1, \ldots, i_l)$ is an integer between 1 and $n_{l+1}$, inclusive.  The special Bernoulli measure we will use in this paper is that associated with the probabilities
\[
p_\textbf{i}=p_{(i_1,\ldots,i_d)}=\frac{1}{N\left(\prod_{l=2}^{d} N(i_1,\ldots,i_{l-1})\right)}.
\]
One can check from the definitions that $\sum_{ \textbf{i} \in D} p_\textbf{i} = 1$. We will refer to this measure as the `coordinate uniform measure', since its key feature is that it is defined inductively on the coordinates to be as uniform as possible.  We emphasise that this is \emph{not} the uniform measure given by $p_\textbf{i} = 1/\lvert D \rvert$.

\begin{figure}[H]
	\centering
		\includegraphics[width=80mm]{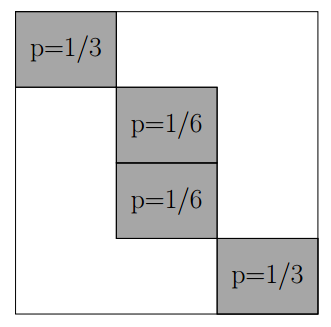}
	\caption[Example of probability measure]{An example of the coordinate uniform measure on the first level of a Bedford-McMullen carpet where $n_1=3$, $n_2=4$ and $D = \{(0,3), (1,1), (1,2), (2,0)\}$.}
	\label{fig:proba measure}
\end{figure}

\section{Results} \label{results}

We are now ready to state our main results, the first of which gives a simple and explicit formula for the Assouad and lower dimensions of a Bedford--McMullen sponge.

\begin{thm} \label{mainthm}
The Assouad dimension of $K$ is 
\[
\dim_{\text{\emph{A}}} K \ = \ \frac{\log N}{\log n_1} \ + \ \sum_{l=2}^d \frac{\displaystyle\log\max_{(i_1,\ldots, i_{l-1})\in D_{l-1} } N(i_1, \ldots, i_{l-1})}{\log n_l}
\]
and the lower dimension of $K$ is
\[
\dim_{\text{\emph{L}}} K \ = \ \frac{\log N}{\log n_1} \ +\ \ \sum_{l=2}^d \frac{\displaystyle\log\min_{(i_1,\ldots, i_{l-1})\in D_{l-1} } N(i_1, \ldots, i_{l-1})}{\log n_l}.
\]
\end{thm}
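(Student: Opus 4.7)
I would follow the two-pronged approach advertised in the introduction. The upper bound on $\dim_{\text{A}} K$ and the lower bound on $\dim_{\text{L}} K$ will come from the measure-theoretic characterisations of these dimensions applied to the coordinate uniform measure $\mu$ described above. The matching lower bound on $\dim_{\text{A}} K$ and upper bound on $\dim_{\text{L}} K$ will come from exhibiting explicit weak tangents in the sense of Mackay and Tyson.

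\textbf{Measure estimates.} Fix $x = \tau(\omega) \in K$ and scales $0 < r \leq R$. Since $n_1 < n_2 < \cdots < n_d$, for each $R$ there exist integers $k_1(R) \geq k_2(R) \geq \cdots \geq k_d(R)$ with $n_l^{-k_l(R)} \asymp R$; define $k_l(r)$ analogously. The first claim is that $B(x,R) \cap K$ is comparable in $\mu$-measure, up to universal multiplicative constants, to the \emph{mixed-level pseudo-cylinder} $Q(\omega, R)$ consisting of those $y \in K$ whose symbolic expansion matches $\omega$ in the first $k_l(R)$ digits of the $l$-th coordinate for each $l$. Using the marginal identity
\[
\sum_{\textbf{i}' \in D,\, \pi_{l-1}\textbf{i}' = (i_1, \ldots, i_{l-1})} p_{\textbf{i}'} \;=\; \frac{1}{N \prod_{m=2}^{l-1} N(i_1, \ldots, i_{m-1})},
\]
the measure $\mu(Q(\omega, R))$ factors as a telescoping product indexed by the blocks $(k_l(R), k_{l-1}(R)]$, within which only the first $l-1$ symbolic coordinates are constrained. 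Forming the ratio $\mu(Q(\omega, R))/\mu(Q(\omega, r))$, cancelling the common innermost block, and bounding each remaining $N(i_1, \ldots, i_{l-1})$ factor by $\max_{D_{l-1}} N(i_1, \ldots, i_{l-1})$ converts the bound, via $n_l^{-k_l(R)} \asymp R$ and $n_l^{-k_l(r)} \asymp r$, into $C(R/r)^s$ with $s$ equal to the claimed formula for $\dim_{\text{A}} K$. Replacing $\max$ by $\min$ and reversing the inequality yields the lower bound on $\dim_{\text{L}} K$; the doubling property of $\mu$ needed there follows from the same bound specialised to $R = 2r$.

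\textbf{Weak tangents.} For the lower bound on $\dim_{\text{A}} K$, for each $l \in \{2, \ldots, d\}$ fix $(i_1^*, \ldots, i_{l-1}^*) \in D_{l-1}$ realising $\max N(i_1, \ldots, i_{l-1})$ and choose $\omega^* \in D^{\mathbb{N}}$ whose expansion contains each optimal prefix as a factor infinitely often with positive density. Rescaling $K$ at $\tau(\omega^*)$ along a sequence of diagonal affine maps whose $l$-th contraction ratio is $n_l^{k}$ (with $k \to \infty$), one obtains a Hausdorff limit $F \subseteq [0,1]^d$ which decomposes after refinement as a product of self-similar ``maximal fibres'', one for each coordinate $l$. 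A direct self-similar calculation shows $\dim_{\text{A}} F$ equals the Theorem's claimed value, and the Mackay--Tyson inequality $\dim_{\text{A}} F \leq \dim_{\text{A}} K$ for weak tangents gives the required lower bound. Swapping $\max$ for $\min$ throughout the construction and applying the analogous weak-tangent inequality for the lower dimension (as developed in the planar setting in \cite{fraser}) closes the remaining direction.

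\textbf{Main obstacle.} The principal difficulty lies in the first step of the measure argument: showing that a Euclidean ball $B(x,R)$ is comparable in $\mu$-measure to the mixed-level pseudo-cylinder $Q(\omega, R)$. In the planar case one controls a single ``approximate square'' scale, but in dimension $d \geq 3$ one must simultaneously track $d$ distinct symbolic scales $k_1(R), \ldots, k_d(R)$, and the geometry of which cylinders slip in or out of the ball depends on the whole contraction hierarchy $n_1 < n_2 < \cdots < n_d$. Relatedly, the telescoping product only rearranges cleanly into the sum appearing in the Theorem when these scales are genuinely separated; coincident $n_l$ merge neighbouring blocks in the telescoping product and force neighbouring $\max$/$\min$ factors to coalesce in a way that disrupts the simple formula, which is precisely the phenomenon flagged in Section \ref{examplenonstrict}.
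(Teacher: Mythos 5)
Your overall strategy (measures for the upper Assouad/lower lower bound, weak tangents for the other two) matches the paper's, but there are three genuine gaps, the first of which is fatal.

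\textbf{The ball-vs-cylinder comparison does not hold in general.} Your measure argument hinges on the claim that $\mu(B(x,R))$ is comparable to $\mu(Q(\omega,R))$ up to universal constants. This is false without a separation hypothesis, and indeed the paper shows (Section \ref{examplemeasures}) that there exist carpets in this class for which \emph{no} Bernoulli measure is doubling, so the coordinate uniform measure certainly cannot be ball-comparable to approximate cubes there. The paper avoids this step entirely: Propositions \ref{adupmeasure} and \ref{lowlowmeasure} use the scaling of $\nu$ on approximate cubes only as a \emph{counting device} to bound the number $N_{r,\omega}$ of small cubes inside a large cube, and then relate $N_{r,\omega}$ directly to the covering number $N_r(B(x,R)\cap K)$ by an elementary inclusion argument ($3^d$ big cubes cover a ball, etc.). At no point is the measure of a Euclidean ball estimated. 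Ball-vs-cube comparability is proved only later, under the VSSC, for Theorem \ref{measurethm}. You correctly flag this step as the main obstacle, but your proposed resolution (assert the comparison) would not work.

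\textbf{The weak tangent maps must have uniformly bounded distortion.} You propose rescaling at a fixed $\omega^*$ by diagonal maps expanding the $l$-th coordinate by $n_l^k$ with $k\to\infty$. The bi-Lipschitz distortion of such a map is $(n_d/n_1)^k \to \infty$, violating the hypothesis $\sup_k b_k/a_k < \infty$ of Proposition \ref{tangents}, so Mackay--Tyson does not apply. The paper instead expands coordinate $l$ by $n_l^{k_l(R)}$ where $k_l(R)$ is chosen so that $n_l^{-k_l(R)} \asymp R$; then the side lengths are all comparable to $R$ and the distortion is at most $n_d$. A consequence is that the word one zooms in on must depend on $R$: the paper's $\omega(R)$ places the maximising prefix $\textbf{i}(l)$ exactly in positions $k_l(R)+1,\ldots,k_{l-1}(R)$ of coordinate $l$ so that, after the shift by $k_l(R)$ places, the $l$-th coordinate of the image is exactly a pre-fractal of the fibre Cantor set $K_l$. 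A fixed $\omega^*$ in which optimal prefixes merely recur with positive density will not align with the cutoffs $k_l(R)$, and it is this strict separation $n_{l-1}<n_l$ that makes the block lengths $k_{l-1}(R)-k_l(R)\to\infty$, which you need for the pre-fractals to converge to the full product $\hat K$.

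\textbf{The lower-dimension upper bound is not a formal dual of the Assouad lower bound.} You close with ``applying the analogous weak-tangent inequality for the lower dimension,'' but lower dimension is not monotone under inclusion and a weak tangent can have strictly smaller lower dimension than the original set (e.g.\ if isolated points appear in the limit). The paper needs Proposition \ref{tangents2}, which shows $\dim_{\text{L}} F \le \underline{\dim}_{\text{B}} \hat F$ (note: lower box dimension of the tangent, not lower dimension), and only when $\hat F$ meets the interior of $[0,1]^d$; it also needs Lemma \ref{subtangent} to pass from the genuine tangent $E$ (which contains $\hat K$ but may be strictly larger near the boundary) to a zoomed-in copy of $\hat K$, and a further iteration argument when $\hat K\cap(0,1)^d=\emptyset$. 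None of this is routine, and the claim that the lower-dimension case follows by ``swapping $\max$ for $\min$ throughout'' is not correct as stated.
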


When $d=2$ we obtain the same formulae as Mackay and Fraser \cite{mackay, fraser} but there is an interesting difference between the carpet and the sponge. In Mackay's formula he considers one specific $\textbf{i}\in D$ which maximises the formula whereas for the sponge we consider multiple, potentially different $\textbf{i}\in D$, which can be thought of as each maximising the dimension in one coordinate. When first considering this question it is not obvious whether the Assouad dimension is obtained for one $\textbf{i} \in D$ (i.e., the maximum would be outside the sum) or for multiple $\textbf{i}\in D$ (maximum inside the sum). We find the second case to be true. This difference will be important in the proofs, notably the lower bound for Assouad dimension and the upper bound for lower dimension have additional complications since they will have to take into account all the different $\textbf{i} \in D$ used in the formulae.

Checking the subsequent proofs, one sees that the upper bound for Assouad dimension and lower bound for lower dimension remain valid when one allows $1 < n_1 \leq n_2 \leq \cdots \leq n_d$ with some of the inequalities replaced by equalities.  However, the arguments giving the lower bound for Assouad dimension and upper bound for lower dimension fail.  We will give an example in Section \ref{examplenonstrict} demonstrating that in fact our formulae do not generally remain valid if the $n_l$ are not strictly increasing.  This is perhaps surprising since the formulae for the Hausdorff and box dimensions are not sensitive to such a perturbation, see \cite[Corollary 4.1.2]{sponges} and note that Olsen does not assume that the $n_l$ are strictly increasing.

We now turn to the question of the existence of sharp measures supported by $K$.  For this we need Olsen's \emph{very strong separation condition} (VSSC), see \cite[4., condition (II)]{sponges}.
\begin{defn}[VSSC]
\emph{A sponge }$K$\emph{ (associated to }$D$)\emph{ satisfies the }very strong separation condition (VSSC)\emph{ if the following holds. If }$l=1, \ldots, d$\emph{ and }$(i_1, \ldots, i_d)$, $(j_1 ,\ldots, j_d)\in D$\emph{ satisfy }$i_1=j_1, \ldots,  i_{l-1}=j_{l-1}$\emph{ and }$i_l \neq j_l$,\emph{ then }$\lvert i_l - j_l \rvert >1$.
\end{defn}

The VSSC permits the following partial answer to \cite[Question 4.4]{fraser}.

\newpage

\begin{thm} \label{measurethm}
Let $K$ be a Bedford-McMullen sponge satisfying the VSSC and let $\mu$ be the coordinate uniform measure. Then there exist constants $C_0, C_1>0$ such that for all $0<r<R \leq 1$ and all $x \in K$, we have
\[
C_0 \, \left( \frac{R}{r} \right)^{\dim_{\text{\emph{L}}} K} \ \leq \ \frac{\mu\left(B(x,R)\right)}{\mu\left(B(x,r)\right)} \ \leq \ C_1 \, \left( \frac{R}{r} \right)^{\dim_{\text{\emph{A}}} K}.
\]
In particular, $\mu$ is a sharp measure for the Assouad dimension and lower dimension simultaneously.
\end{thm}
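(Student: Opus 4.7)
The plan is to estimate $\mu(B(x,R))$ directly via symbolic cylinders adapted to the non-uniform contraction of the IFS, using the VSSC to certify that these cylinders faithfully model metric balls. Fix $x = \tau(\omega) \in K$ and a scale $R \in (0,1]$, and define integers $k_l = k_l(R)$ by $n_l^{-k_l} \leq R < n_l^{-(k_l-1)}$; since $n_1 < \cdots < n_d$, these satisfy $k_1 \geq \cdots \geq k_d$, and I set $k_{d+1} := 0$. Put
\[
\widetilde{Q}(x, R) := \{\eta \in D^{\mathbb{N}} : \eta_{j, l} = \omega_{j, l} \text{ for all } 1 \leq j \leq k_l \text{ and all } l = 1, \ldots, d\},
\]
so $\tau(\widetilde{Q}(x, R))$ is a geometric hyperrectangle of side $n_l^{-k_l} \asymp R$ in direction $l$. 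The key consequence of the VSSC is that whenever two codings in $D^{\mathbb{N}}$ first disagree at some level $m$ in the lex-minimal coordinate $l$, their images are separated by at least one grid unit in direction $l$ at scale $n_l^{-m}$. A careful case analysis using this will give, uniformly in $x$ and $R$, that $B(x,R)$ meets only boundedly many such approximate hyperrectangles and contains $\tau(\widetilde{Q}(x, cR))$ for some absolute $c > 0$; consequently $\mu(B(x,R)) \asymp \tilde{\mu}(\widetilde{Q}(x,R))$ with constants depending only on $d$ and $D$.

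Next I compute $\tilde{\mu}(\widetilde{Q}(x,R))$ explicitly. A short induction on $d-l$ shows that the marginal of the coordinate uniform measure on the first $l$ coordinates assigns $q_{(i_1, \ldots, i_l)} := 1/(N \prod_{l'=2}^l N(i_1, \ldots, i_{l'-1}))$ to each $(i_1, \ldots, i_l) \in D_l$: the probabilities $p_{\mathbf{i}}$ are engineered precisely so that summing over the last coordinates telescopes, i.e., so that conditional on the first $l$ coordinates the $(l+1)$-th is distributed uniformly over its $N(i_1, \ldots, i_l)$ admissible values. Since the Bernoulli structure makes level-$m$ events independent and $k_1 \geq \cdots \geq k_d$, on any level $m$ with $k_{l+1} < m \leq k_l$ the cylinder $\widetilde{Q}(x,R)$ imposes exactly the constraint that the first $l$ coordinates of $\eta_m$ match those of $\omega_m$, whence
\[
\tilde{\mu}(\widetilde{Q}(x, R)) = \prod_{l=1}^d \prod_{m = k_{l+1}+1}^{k_l} q_{\pi_l(\omega_m)}.
\]

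For the final step, fix $0 < r < R \leq 1$ with associated levels $k_l \leq k_l'$. Using $q_{\pi_l(\omega_m)}/q_{\pi_{l'}(\omega_m)} = \prod_{l''=l'+1}^{l} 1/N(\pi_{l''-1}(\omega_m))$ whenever $l \geq l'$ (with the convention $N(\pi_0(\omega_m)) := N$) and dividing the two factorisations above yields
\[
\frac{\tilde{\mu}(\widetilde{Q}(x, R))}{\tilde{\mu}(\widetilde{Q}(x, r))} = \prod_{l=1}^d \prod_{m = k_l + 1}^{k_l'} N(\pi_{l-1}(\omega_m)).
\]
Each factor lies in $[\min_{D_{l-1}} N(\cdot), \max_{D_{l-1}} N(\cdot)]$, and $k_l' - k_l = \log(R/r)/\log n_l + O(1)$ uniformly in $l$; bounding the factors above by the maxima produces the upper bound $\leq C_1'(R/r)^{\dim_{\text{A}} K}$, and below by the minima produces $\geq C_0'(R/r)^{\dim_{\text{L}} K}$, matching exactly the formulae in Theorem \ref{mainthm}. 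Combining with the ball-to-cylinder comparison completes the proof. The main obstacle, and the only place that requires genuine geometric work, is that first comparison: converting the VSSC — a combinatorial statement about single digit tuples — into a uniform comparison between $\mu(B(x,R))$ and $\tilde{\mu}(\widetilde{Q}(x,R))$ demands care near the boundaries of cells at every level. Once this is in place, everything else is bookkeeping with the $q$-probabilities.
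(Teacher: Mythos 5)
Your overall strategy mirrors the paper's: estimate the $\tilde{\mu}$-measure of approximate cubes via the explicit product formula, bound the ratio $\tilde{\mu}(\widetilde{Q}(x,R))/\tilde{\mu}(\widetilde{Q}(x,r))$ between $(R/r)^{\dim_{\text{L}} K}$ and $(R/r)^{\dim_{\text{A}} K}$, and use the VSSC to transfer from cylinders to balls. Your cylinder formula is equivalent to the paper's equation (3.1) after unwinding indices, and your ratio calculation is precisely the content of Sections 3.2 and 3.4. The only structural difference is that the paper invokes Olsen's Proposition 6.2.1 for the ball-to-cylinder comparison rather than re-proving it, and then does a small reduction to scales on the $n_1$-grid; you instead sketch the geometric step directly.

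There is, however, one imprecision in your sketch that actually matters. You assert that the VSSC gives "$B(x,R)$ meets only boundedly many" approximate cubes and that this, together with the easy containment $\tau(\widetilde{Q}(x,cR))\subseteq B(x,R)$, yields $\mu(B(x,R))\asymp\tilde{\mu}(\widetilde{Q}(x,R))$. But bounded multiplicity is a generic fact about any sponge (the approximate cubes at scale $R$ tile $[0,1]^d$ with sides comparable to $R$), and it does \emph{not} suffice for the upper estimate: the example in Section 4.2 has a ball meeting exactly two adjacent approximate cubes whose $\tilde{\mu}$-measures differ by an unbounded factor, which is precisely why doubling fails there. The genuine content of VSSC, as captured by Olsen's Proposition 6.2.1, is that a ball of radius $2^{-1}n_1^{-k}$ centred in $K$ intersects $K$ inside a \emph{single} approximate cube at scale $n_1^{-k}$. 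It is this one-cube containment — not bounded multiplicity — that gives $\mu(B(x,R))\lesssim\tilde{\mu}(\widetilde{Q}(x,CR))$. Your description of what VSSC gives (one-grid-unit separation in the lex-minimal disagreeing coordinate) is exactly what produces that containment once the case analysis is carried out, so the fix is to state the right intermediate conclusion; as written, the "consequently" step does not follow. A second, smaller, omission: to pass from $\tilde{\mu}$ on cylinders to $\mu$ on their images you need $\mu(\tau(\widetilde{Q}))=\tilde{\mu}(\widetilde{Q})$, which holds because the VSSC makes $\tau$ injective — worth saying explicitly.
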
 

The general question of whether there exist sharp measures for self-affine sponges and carpets without assuming the VSSC remains open and of interest to us.  In Section \ref{examplemeasures} we point out that Bernoulli measures cannot generally be used to solve this problem: specifically, there are self-affine carpets for which all Bernoulli measures fail to be doubling.  This was first proved by Li, Wei and Wen \cite{doublingcarpets}.  There the authors perform a detailed analysis of the precise conditions under which a Bedford-McMullen carpet carries a doubling self-affine measure, but they do not consider the existence of `sharp measures' in the context of Assouad dimension.  The existence of carpets which carry no doubling Bernoulli measures is perhaps surprising in light of the fact that self-similar sets satisfying the open set condition always carry a doubling Bernoulli measure which is simultaneously sharp for both lower and Assouad dimension (in fact it is Ahlfors regular).  More precisely, for a self-similar set defined via mappings with similarity ratios $\{c_i\}_{i \in \mathcal{J}}$ satisfying the open set condition and having Hausdorff dimension $s$, the Bernoulli measure corresponding to the probability vector $\{c_i^s\}_{i \in \mathcal{J}}$ satisfies
\[
C_0\left(\frac{R}{r}\right)^s \ \leq \ \frac{\mu(B(x,R))}{\mu(B(x,r))} \ \leq \ C_1\left(\frac{R}{r}\right)^s
\] 
for some uniform constants $C_0, C_1>0$ for all $x$ in the self-similar set and all $0<r<R \leq 1$.  For more information on self-similar sets, see \cite{hutch, falconer}.

\section{Proofs} \label{proof}

In this section we prove our main results, Theorems \ref{mainthm} and \ref{measurethm}.  Section \ref{intronotation} will introduce some important notation and concepts which will be used throughout the subsequent sections including \emph{approximate cubes}.  Theorem \ref{mainthm} will then be proved via four bounds given in Sections \ref{upperbound} through \ref{upperlower} and Theorem \ref{measurethm} will be proved in Section \ref{measurethmproof}.

\subsection{Important notation} \label{intronotation}

The symbolic-to-geometric projection map $\tau$ induces the coordinate functions $\tau_l$ which are just taken to be the $l^{th}$ coordinate of $\tau$.  For the application of these, note that the $S_{\textbf{i}}$, and therefore $\tau$, act independently on each coordinate.

We also let $\sigma:D^{\mathbb{N}} \rightarrow D^{\mathbb{N}}$ be the shift map $\sigma(\textbf{i}_1,\textbf{i}_2,\ldots)=(\textbf{i}_2, \textbf{i}_3, \ldots)$, which will be used to `zoom in' on certain interesting parts of our sponge.

Approximate cubes are well-known tools used in the study of self-affine sponges and will be used extensively throughout our proofs.  For all $r\in (0,1]$ we choose the unique integers $k_1(r),\ldots,k_d(r)$, greater than or equal to 0, satisfying
\[
\frac{1}{n_l^{k_l(r)+1}}< r \leq \frac{1}{n_l^{k_l(r)}}
\]
for $l=1,\ldots,d$.  In particular, 
\[
 \frac{-\log r}{\log n_l}-1 < k_l(r) \leq \frac{-\log r}{\log n_l}.
\]
Then the approximate cube $Q(\omega, r)$ of (approximate) side length $r$ determined by $\omega =\left( \textbf{i}_1, \textbf{i}_2 , \ldots \right) =\left( (i_{1,1}, \dots, i_{1,d}), (i_{2,1}, \dots, i_{2,d}) , \ldots \right)    \in D^{\mathbb{N}}$ is defined by
\[
Q(\omega, r)=\left\{ \omega'=\left( \textbf{j}_1, \textbf{j}_2 , \ldots \right)\in D^{\mathbb{N}} : \forall \, \,  l=1, \ldots, d \text{ and } \forall\, \, t= 1, \ldots, k_l(r) \text{ we have } j_{t,l}=i_{t,l} \right\}.
\]
Here our approximate cube is defined symbolically, which we find simplifies the proofs. The geometric analogue is $\tau\left(Q(\omega, r)\right)$, which is contained in
\[ 
\prod_{l=1}^d \left[\frac{i_{1,l}}{n_l}+\cdots+\frac{i_{k_l(r),l}}{n_l^{k_l(r)}} \, , \, \frac{i_{1,l}}{n_l}+\cdots+\frac{i_{k_l(r),l}}{n_l^{k_l(r)}}+\frac{1}{n_l^{k_l(r)}} \right];
\]
 a hypercuboid in $\mathbb{R}^d$ aligned with the coordinate axes and of side lengths $n_l^{-k_l(r)}$, which are all comparable to $r$ since $ r \leq n_l^{-k_l(r)} < n_l r$. This is why we call $Q(\omega,r)$ an approximate cube of side length $r$.

Finally, we will need to consider tangents to our sponge $K$ and these are defined using the Hausdorff metric $d_\mathcal{H}$ on the space of non-empty compact subsets of $\mathbb{R}^d$, which is defined by
\[
d_\mathcal{H}(A,B) \ = \  \inf \big\{ \varepsilon \geq 0   :   A \subseteq [B]_{\varepsilon} \text{ and } B \subseteq [A]_{\varepsilon} \big\}
\]
where $[A]_{\varepsilon}$ is the closed $\varepsilon$-neighbourhood of a set $A$.

\subsection{Upper bound for Assouad dimension} \label{upperbound}

For the upper bound we will use the measure theoretic definition of Assouad dimension.  Let $\mu$ be the coordinate uniform measure and $\{p_\textbf{i}\}_{\textbf{i} \in D}$ be the associated probabilities.  Olsen \cite[Section 3.1]{sponges} associated conditional probabilities defined by
\[
p (i_l \vert i_1, \ldots , i_{l-1})=
\begin{cases}
               \frac{\displaystyle\sum_{\substack{\textbf{j}=\left( j_1, \ldots, j_d\right)\in D \\ j_1=i_1, \ldots, j_{l-1}=i_{l-1}, j_l=i_l}}p_{\textbf{j}}}{\displaystyle\sum_{\substack{\textbf{j}=\left( j_1, \ldots, j_d\right)\in D \\ j_1=i_1, \ldots, j_{l-1}=i_{l-1}}}p_{\textbf{j}}} \hspace{25pt} \text{if } (i_1, \ldots, i_l) \in D_l   \\
              0 \hspace{125pt}  \text{if } (i_1, \ldots, i_l) \not\in D_l
            \end{cases}
\]
and it follows immediately from the definitions that $p(i_{l}\vert i_{1},\ldots,i_{l-1})=1/N(i_1,\ldots,i_{l-1})$ for $l=2,\ldots,d$ and $p(i_1\vert \emptyset)=1/N$ whenever $(i_1, \ldots, i_l)\in D$.  Roughly speaking, $p (i_l \vert i_1, \ldots , i_{l-1})$ is the probability of choosing $i_l$ as the next digit of $ ( i_1, \ldots , i_{l-1})$.

We recall that in the measure theoretic definition of Assouad dimension, our condition is dependent on the measure of balls which we would like to replace with approximate cubes. We will now prove that our approximate cubes can be used to find a suitable upper bound for the Assouad dimension. In Section \ref{measurethmproof} we will show that balls and approximate cubes are equivalent in our definition given the VSSC and in Section \ref{examplemeasures} we provide an example where the two have distinct properties.  The following proposition is inspired by \cite[Proposition 4]{bylund} and the measure theoretic definition of Assouad dimension.

\begin{prop}\label{adupmeasure}
Suppose there exists a Borel probability measure $\nu$ on $D^{\mathbb{N}}$ and constants $C >0$ and $s \geq 0$ such that for any $0< r<R\le1$ and $\omega \in D^{\mathbb{N}}$ we have
\[
\frac{\nu \left( Q(\omega,R)\right)}{\nu \left( Q(\omega,r)\right)}\le C \left( \frac{R}{r} \right)^s.
\]
Then $\dim_{\text{\emph{A}}} K \le s$.
\end{prop}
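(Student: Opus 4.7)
The plan is to establish the covering bound $N_r(B(x,R)\cap K)\le C_0(R/r)^s$ for all $x\in K$ and all $0<r<R\le 1$, from which $\dim_{\text{A}} K\le s$ follows directly from the covering definition of the Assouad dimension. The bridge between the symbolic hypothesis on $\nu$ and geometric covers of balls will be the approximate cubes, which are symbolic objects whose images in $[0,1]^d$ are hyper-rectangles of side lengths comparable to their symbolic level up to a constant factor depending only on $d$ and $n_d$.

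The first step is a symbolic covering lemma: for every $\omega\in D^{\mathbb{N}}$ and $0<r<R\le 1$, the number of distinct level-$r$ approximate cubes contained in $Q(\omega,R)$ is at most $C(R/r)^s$. The argument is a measure-counting one. Any sub-cube $Q(\omega',r)\subseteq Q(\omega,R)$ has $\omega'\in Q(\omega,R)$, so $Q(\omega',R)=Q(\omega,R)$, and the hypothesis applied to $\omega'$ yields the uniform lower bound $\nu(Q(\omega',r))\ge \nu(Q(\omega,R))/(C(R/r)^s)$. Since these sub-cubes are pairwise disjoint and contained in $Q(\omega,R)$, comparing with the total mass gives at most $C(R/r)^s$ of them. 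A useful preliminary observation is that taking $R=1$, so that $Q(\omega,1)=D^{\mathbb{N}}$ has full $\nu$-measure, forces $\nu(Q(\omega,r))>0$ for every $\omega$ and $r\in(0,1]$; this rules out degenerate cases in the counting.

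The second step transfers this bound to the geometric side. Fix $x\in K$ and $0<r<R\le 1$. In each direction $l$, the $l$-th projection of a level-$R$ approximate cube has length $n_l^{-k_l(R)}\in[R,n_l R)$, so the interval of length $2R$ around $x_l$ meets at most three such cubes; the ball $B(x,R)\cap K$ is therefore covered by the $\tau$-images of at most $3^d$ level-$R$ approximate cubes $Q(\omega^{(i)},R)$. Choosing $r'=r/(\sqrt{d}\,n_d)$ ensures every level-$r'$ approximate cube has geometric diameter at most $r$, so applying the symbolic lemma at scale $r'$ covers each $\tau(Q(\omega^{(i)},R))$ by at most $C(R/r')^s$ sets of diameter at most $r$. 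Summing gives
\[
N_r(B(x,R)\cap K) \ \le \ 3^d\, C\, (\sqrt{d}\,n_d)^s\left(\frac{R}{r}\right)^s,
\]
and hence $\dim_{\text{A}} K\le s$.

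Conceptually the argument is light; its content is the one-line measure inequality underlying the symbolic lemma. The main obstacle is really just the bookkeeping needed to relate Euclidean balls to approximate cubes under the anisotropic scaling of the IFS — tracking how the factor $\sqrt{d}\,n_d$ accounts both for the maximum diameter of a level-$s$ approximate cube and for the three-cube cross-count of a ball against the level-$R$ grid — together with the observation that the hypothesis automatically upgrades $\nu$ to give positive mass to every approximate cube, so no non-generic cases arise.
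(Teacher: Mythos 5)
Your proof is correct and takes essentially the same route as the paper: a symbolic measure-counting bound on the number of level-$r$ approximate sub-cubes of a level-$R$ cube, followed by the geometric transfer via the $3^d$ cross-count of a ball against the level-$R$ grid. The small differences — dropping the paper's unneeded factor of $1/2$ (same-level cylinder sets in $D^{\mathbb{N}}$ are genuinely pairwise disjoint) and passing to the finer scale $r'=r/(\sqrt{d}\,n_d)$ instead of re-covering each level-$r$ cube by $2n_1\cdots n_d$ small open sets — are cosmetic.
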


\begin{proof}

Given $0< r<R\le1$, choose an approximate cube $Q(\omega,R)$, and consider the approximate cubes of side length $r$ which are subsets of $Q(\omega,R)$. Let $N_{r,\omega}$ be the number of such approximate cubes (ones with side length $r$) and choose a representative set of centres $\left\{ \omega_i\right\}_{i=1, \ldots, N_{r,\omega}}$, one for each of the smaller approximate cubes.

For all $i=1,\ldots, N_{r,\omega}$, since $Q(\omega_i,R) =  Q(\omega,R)$, we have by our scaling assumption that
\[
\nu \left( Q (\omega,R) \right) \ = \  \nu \left( Q(\omega_i,R)\right) \ \le \  C \left(  \frac{R}{r} \right)^s  \nu \left( Q(\omega_i,r)\right).
\]

We also have
\[
\nu \left( Q (\omega,R) \right) \ \ge \ \tfrac{1}{2} \, \sum_{i=1}^{N_{r,\omega}} \nu \left( Q (\omega_i,r) \right) \ \ge \  \tfrac{1}{2} \,  N_{r,\omega} \, \min_{i=1, \ldots, N_{r,\omega}} \nu \left( Q (\omega_i,r) \right).
\]

The `1/2' in the above expression accounts for the fact that two approximate cubes may share a boundary, but in practise the boundaries carry zero measure so it is actually surplus to requirements. Therefore we obtain
\[
N_{r,\omega} \, \le \, 2 \, C \left( \frac{R}{r}\right)^s
\]
by picking $i$ in the first part to be the same as the one used for the minimum in the second part. We remember that in our first definition of Assouad dimension we were led to consider $N_{r}(B(x,R))$, which was the smallest number of balls needed to cover a larger ball whereas here we have $N_{r, \omega}$, which is the number of geometric approximate cubes contained in a larger geometric approximate cube. Thankfully these two quantities give us an equivalent definition for the Assouad dimension since any ball of radius $R$ centered in $K$ is contained in at most $3^d$ geometric approximate cubes of side length $R$ and similarly any geometric approximate cube of side length $r$ can be covered by at most $2\, n_1 \times \cdots \times n_d$ open sets of diameter $r$.  This yields
\[
\sup_{x\in K} N_r (B(x,R)\cap K) \leq 2\, n_1 \times \cdots \times n_d \, 3^{d} \, C\left(\frac{R}{r}\right)^{s}
\]
which in turn gives $\dim_\text{A} F \le s$, as required.
\end{proof}

We now wish to estimate the measure of approximate cubes, which is fortunately quite straightforward.  It follows from the definitions (and was observed by Olsen \cite[(6.2)]{sponges}) that
\begin{equation} \label{approxcubemeasure}
\tilde \mu(Q(\omega,r))=\prod^d_{l=1} \prod_{j=0}^{k_l(r)-1}p_l(\sigma^j\omega)
\end{equation}
where $p_l(\omega)=p(i_{1,l}\vert i_{1,1},\ldots,i_{1,l-1})$. We can use this to obtain our upper bound.

\begin{proof} By (\ref{approxcubemeasure}), we have
\begin{align*}
\frac{\tilde \mu(Q(\omega,R))}{\tilde \mu(Q(\omega,r))}
&=\frac{\prod^d_{l=1} \prod_{j=0}^{k_l(R)-1}p_l(\sigma^j\omega)}{\prod^d_{l=1} \prod_{j=0}^{k_l(r)-1}p_l(\sigma^j\omega)} \\
&=\prod^d_{l=1} \prod_{j=k_l(R)}^{k_l(r)-1}\frac{1}{p_l(\sigma^j\omega)} \\
&\leq \left( \prod_{j=k_1(R)}^{k_1(r)-1} N \right)\left(\prod^d_{l=2} \prod_{j=k_l(R)}^{k_l(r)-1} \max_{(i_1,\ldots,i_{l-1})\in D_{l-1}} N(i_1,\ldots,i_{l-1})\right) \\
&=N^{k_1(r)-k_1(R)}\left( \prod_{l=2}^d \max_{(i_1,\ldots,i_{l-1})\in D_{l-1}} N(i_1,\ldots,i_{l-1})^{k_l(r)-k_l(R)}\right) \\
& \leq N^{\log R/\log n_1-\log r/\log n_1+1}\left(\prod_{l=2}^d \max_{(i_1,\ldots,i_{l-1})\in D_{l-1}} N(i_1,\ldots,i_{l-1})^{\log R/\log n_l-\log r/\log n_l+1}\right) \\
&\le  N^{\log (R/r)/\log n_1} n_1\left(\prod_{l=2}^d \displaystyle\max_{(i_1,\ldots,i_{l-1})\in D_{l-1}}   N(i_1,\ldots,i_{l-1})^{\log (R/r)/\log n_l} \, n_l\right) \\
&= n_1 \times \cdots \times n_d\left(\frac{R}{r}\right)^{\displaystyle\frac{\log N}{\log n_1}}\left(\prod_{l=2}^d \left( \frac{R}{r}\right)^{\frac{\displaystyle\log \max_{(i_1,\ldots,i_{l-1})\in D_{l-1}} N(i_1, \ldots, i_{l-1})}{\displaystyle\log n_l}}\right) \\
&\le n_d^d\left( \frac{R}{r}\right)^{\displaystyle\frac{\log N}{\log n_1}+\sum_{l=2}^d \frac{\displaystyle\log \max_{(i_1,\ldots,i_{l-1})\in D_{l-1}} N(i_1, \ldots, i_{l-1})}{\displaystyle\log n_l}}.
\end{align*}

This estimate combined with Proposition \ref{adupmeasure} gives us the desired upper bound.
\end{proof}

\subsection{Lower bound for Assouad dimension} \label{lowerbound}

For the lower bound we will use `weak tangents', a technique due to Mackay and Tyson \cite[Proposition 6.1.5]{mackaytyson}.  The version we state and use here is a minor modification due to Fraser \cite[Proposition 7.7]{fraser}. This technique allows us to create simple tangent sets with the desired Assouad dimension and the following proposition gives us our lower bound.

\newpage

\begin{prop}[Very weak tangents]\label{tangents}
Let $X\subset \mathbb{R}^d$ be compact and let $F$ be a compact subset of $X$, Let $(T_k)$ be a sequence of bi-Lipschitz maps defined on $\mathbb{R}^d$ with Lipschitz constants $a_k, b_k \ge 1$ such that 
\[
a_k \lvert x-y \rvert \leq \lvert T_k(x) - T_k(y) \rvert \leq b_k \lvert x-y \rvert \,\,\,\,\,\,\,\, (x,y\in\mathbb{R}^d)
\]
and
\[
\sup_k b_k / a_k = C_0 <\infty
\]
and suppose that $T_k(F) \cap X \rightarrow \hat{F}$ in the Hausdorff metric. Then the set $\hat F$ is called a \emph{very weak tangent} to $F$ and, moreover,  $\dim_{\text{\emph{A}}} F \geq \dim_{\text{\emph{A}}} \hat{F}$.
\end{prop}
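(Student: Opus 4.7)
The plan is to verify the covering formulation of Assouad dimension directly on $\hat F$ by pulling covers of small balls in $\hat F$ back through the bi-Lipschitz maps $T_k$ to covers of balls in $F$ at comparable scales. Because $\sup_k b_k/a_k = C_0 < \infty$, the ratio $R/r$ is only distorted by a factor of at most $C_0$ in this transfer, so any admissible exponent $s > \dim_{\text{A}} F$ also witnesses $\dim_{\text{A}} \hat F \le s$, which gives the inequality on letting $s \downarrow \dim_{\text{A}} F$.

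Fix $s > \dim_{\text{A}} F$ and let $C, \rho > 0$ be the constants supplied by the covering definition of $\dim_{\text{A}} F$. First, given $\hat x \in \hat F$ and $0 < r < R \le 2\rho/5$, I would use the Hausdorff convergence $T_k(F) \cap X \to \hat F$ to pick $k$ large enough that $d_{\mathcal{H}}(T_k(F) \cap X, \hat F) < r/4$; this ensures that any cover of $T_k(F) \cap B(\hat x, R+r/4)$ by sets of diameter $r/2$ can be inflated, via $r/4$-neighbourhoods, to a cover of $\hat F \cap B(\hat x, R)$ by sets of diameter at most $r$. Next I would pull back through $T_k^{-1}$, which has Lipschitz constant $1/a_k \le 1$: this realises $T_k(F) \cap B(\hat x, R+r/4)$ as the $T_k$-image of a subset of $F$ of diameter at most $5R/(2a_k)$, which I enclose in a ball $B(z_0, R')$ in $\mathbb{R}^d$ with $R' = 5R/(2a_k) \le \rho$. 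Then I would apply the Assouad-dimension estimate on $F$ at scales $R'$ and $r' = r/(2b_k)$; the ratio is $R'/r' = 5Rb_k/(ra_k) \le 5C_0\, R/r$, so this produces a cover of $F \cap B(z_0, R')$ by at most $C(5C_0)^s(R/r)^s$ sets of diameter $r'$. Pushing forward through the $b_k$-Lipschitz map $T_k$ turns each into a set of diameter at most $b_k r' = r/2$, and combining with the first step gives the bound $N_r(\hat F \cap B(\hat x, R)) \le C(5C_0)^s(R/r)^s$, as required.

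The main obstacle I expect is the synchronisation of two constraints on $k$: it must be large enough that the Hausdorff approximation is accurate to within $r/4$, while simultaneously the induced scales $R', r'$ on $F$ must lie in the range $(0,\rho]$ where the Assouad estimate on $F$ is valid. The hypothesis $a_k \ge 1$ rescues the second constraint, since it forces $R' \le 5R/2$, so restricting to $R \le 2\rho/5$ from the start leaves $k$ free to be chosen as large as the Hausdorff approximation requires. Beyond this bookkeeping the argument is essentially routine, because the factor $C_0$ absorbs all dependence on $k$ into a uniform multiplicative constant in the final cover count.
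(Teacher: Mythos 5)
Your proof is correct. The paper itself does not give a proof of Proposition~\ref{tangents}; it is stated as a minor modification of Mackay--Tyson, with the proof attributed to Fraser~\cite[Proposition 7.7]{fraser}. Your argument is the standard one: fix $s>\dim_{\text{A}}F$ with witnessing constants $C,\rho$, use the Hausdorff convergence to approximate $\hat F \cap B(\hat x, R)$ by $T_k(F)\cap X$ up to an $r/4$ error, pull back through $T_k^{-1}$ (contracting, since $a_k\ge 1$) to land inside a ball $B(z_0,R')\subseteq\mathbb{R}^d$ centred in $F$ with $R'\le\rho$, apply the Assouad estimate on $F$ at scales $R'$ and $r'=r/(2b_k)$, push the resulting cover forward through $T_k$ to get sets of diameter $\le r/2$, and inflate by $r/4$ to recover a cover of $\hat F\cap B(\hat x,R)$ of cardinality at most $C(5C_0)^s(R/r)^s$. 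The scale distortion is controlled precisely by $\sup_k b_k/a_k = C_0$ as you note, and the bookkeeping of the two constraints on $k$ (Hausdorff accuracy and keeping $R'\le\rho$) is handled correctly by the restriction $R\le 2\rho/5$ together with $a_k\ge 1$, which decouples the choice of $k$ from the scale constraint. Letting $s\downarrow\dim_{\text{A}}F$ completes the argument. This matches the expected route for the cited result.
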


To simplify notation, we choose $\textbf{i}(l) = (i(l)_1, \ldots, i(l)_d)\in D$ for $l=2, \ldots d$ to be an element of $D$ which attains the maximum value for $N(i_1, \ldots, i_{l-1})$, i.e.
\[
N \left(i(l)_1, \ldots, i(l)_{l-1} \right)  \ =  \  \max_{(i_1,\ldots,i_{l-1})\in D_{l-1}} N(i_1, \ldots, i_{l-1}).
\]
There might be several possibilities for each $\textbf{i}(l)$, but thankfully it does not matter which one we pick. 

We will show that the set
\[
\hat{K}=\pi_1 K \times \prod_{l=2}^d K_l
\]
 is a subset of some very weak tangent to $K$, where $K_l$ is the Cantor set (or unit interval or single point) obtained by the IFS
\[
\left\{S_{l,1}(x)=\frac{x+j_{l,1}}{n_l} \, ,\ldots,\, S_{l, N(i_1, \ldots, i_{l-1})}(x)=\frac{x+j_{l,N(i_1, \ldots, i_{l-1})}}{n_l}\right\}
\]
acting on $[0,1]$ such that $j_{l,k}$ is the $l^{\text{th}}$ element of some $\textbf{j}_k \in D$ where for all $k=1, \ldots, N(i_1, \ldots, i_{l-1})$, the first $l-1$ coordinates of $\textbf{j}_k$ coincide with the first $l-1$ coordinates of $\textbf{i}(l)$. This IFS occurs naturally from the functions in the original IFS used to define our sponge: we simply take the $l^{\text{th}}$ component of the original function $S_{\textbf{j}_k}$ for each $k=1, \ldots, N(i_1, \ldots, i_{l-1})$. The set $\pi_1 K$ is the geometric projection of the sponge on to the first coordinate, or one can think of it as the Cantor set (or unit interval or single point) defined like our sponges but working in $[0,1]$ with defining set $\pi_1 D$.  In a slight abuse of notation, throughout the paper we use $\pi_1$ to denote projection onto the first coordinate in both the symbolic space $D$ and geometric space $\mathbb{R}^d$.

For $l=2, \dots, d$ and $m \in \mathbb{N}$, we let $K_l^m$ be the $m^{\text{th}}$ pre-fractal of $K_l$ where our initial set is $[0,1]$. In particular, the set $K_l^m$ is a union of $N(i_1, \ldots, i_{l-1})^m$ intervals of length $n_l^{-m}$.

Given a geometric approximate cube $\tau(Q) =\tau(Q(\omega,r))$, we define a bi-Lipschitz map $T^Q: \tau(Q)  \to [0,1]^d$ (or $T^Q:  \mathbb{R}^d  \to \mathbb{R}^d$) by
\[
T^Q(\textbf{x})= \begin{pmatrix}n_1^{k_1(r)}\left( x_1-\left( \frac{i_{1,1}}{n_1}+\ldots+\frac{i_{k_1(r),1}}{n_1^{k_1(r)}} \right) \right)\\\vdots\\n_d^{k_d(r)}\left( x_d-\left( \frac{i_{1,d}}{n_d}+\ldots+\frac{i_{k_d(r),d}}{n_d^{k_d(r)}} \right) \right)\end{pmatrix}.
\]
Thus $T^Q$ translates $\tau(Q)$ such that the point closest to the origin from the hypercuboid containing $\tau(Q)$ becomes the origin and then scales it up by a factor of $n_l^{k_l(r)}$ in each coordinate $l$. Thus these maps take the natural hypercuboid containing $\tau(Q)$ precisely to the unit cube $[0,1]^d$. We define $T_l^Q$ to be the $l^{\text{th}}$ component of $T^Q$, which is possible since $T^Q$ acts independently on each coordinate. These maps clearly satisfy the conditions imposed by Proposition \ref{tangents}, i.e., they are restrictions of bi-Lipschitz maps on $\mathbb{R}^d$ with constants $b_Q=\sup_{l=1, \ldots, d}n_l^{k_l(r)}$ and $a_Q=\inf_{l=1, \ldots, d}n_l^{k_l(r)}$ satisfying
\[
\frac{b_Q}{a_Q}\leq \sup_{l=1,\ldots,d}\frac{rn_l}{r}\leq n_d < \infty
\]
for any $Q$.   This follows from the definition of $k_l(r)$.

We define, for small $R$, $\omega(R)=\left(\textbf{i}_1, \textbf{i}_2, \ldots \right) \in D^{\mathbb{N}}$ where $\textbf{i}_t = (i_{t,1}, \dots, i_{t,d}) =\textbf{i}(l)$ for $t=k_l(R)+1, \ldots,k_{l-1}(R)$ for all $l=2, \ldots, d$. So $\omega(R)$ has the form
\[
\omega(R)=\left( \textbf{i}_1, \ldots, \underbrace{\textbf{i}(d), \ldots, \textbf{i}(d)}_{k_{d-1}(R)-k_{d}(R) \text{ times}}, \ \ \underbrace{\textbf{i}(d-1), \ldots, \textbf{i}(d-1)}_{k_{d-2}(R)-k_{d-1}(R) \text{ times}}, \ldots,  \underbrace{\textbf{i}(2), \ldots, \textbf{i}(2)}_{k_1(R)-k_{2}(R) \text{ times}}, \ldots  \right).
\]
Note that this step relies on $n_{l-1}<n_{l}$ for each $l=2, \ldots, d$, since otherwise $k_{l-1}(R)= k_{l}(R)$.  The idea of defining $\omega(R)$ like this, where one follows one word for a long time and then changes to another, is somewhat inspired by the approach of Fraser--Miao--Troscheit in the random setting \cite[Section 6.3.3]{fraserrandom}.

\begin{lma} \label{productform}
For $ R \in (0,1]$ small enough and $Q=Q(\omega(R),R)$, we have
\[
T^Q(\tau(Q)) \ \subseteq \  \pi_1 K \times \prod_{l=2}^d K_l^{k_{l-1}(R)-k_{l}(R)}.
\]
\end{lma}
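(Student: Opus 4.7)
The plan is to analyse coordinate-by-coordinate how the digits of an arbitrary $\omega'\in Q(\omega(R),R)$ transform under $T^Q$, using that $n_1<n_2<\cdots<n_d$ forces the monotonicity $k_1(R)\ge k_2(R)\ge\cdots\ge k_d(R)$. Writing $\omega'=(\textbf{j}_1,\textbf{j}_2,\ldots)$ with $\textbf{j}_t=(j_{t,1},\ldots,j_{t,d})\in D$, membership $\omega'\in Q(\omega(R),R)$ amounts exactly to $j_{t,k}=i_{t,k}$ whenever $t\le k_k(R)$, where $(i_{t,k})$ are the coordinates of $\omega(R)$. A direct computation from the definitions of $T^Q$ and $\tau$, using that the lock in coordinate $k=l$ kills the subtracted partial sum, gives
\[
T_l^Q(\tau(\omega'))=\sum_{s\ge 1}\frac{j_{k_l(R)+s,l}}{n_l^{s}},
\]
so the $l^{\text{th}}$ coordinate of $T^Q(\tau(\omega'))$ is a base-$n_l$ expansion involving only the $l^{\text{th}}$-coordinate digits of $\omega'$ from position $k_l(R)+1$ onwards.

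For $l=1$ every $t=k_1(R)+s$ with $s\ge 1$ satisfies $t>k_k(R)$ for all $k$, so no coordinate is locked at these positions and $j_{t,1}$ may be any element of $\pi_1 D$; the resulting sum is then an arbitrary element of $\pi_1 K$, giving $T_1^Q(\tau(\omega'))\in\pi_1 K$.

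For $l\in\{2,\ldots,d\}$, set $m_l:=k_{l-1}(R)-k_l(R)$ and split the tail at $s=m_l$. If $s\in\{1,\ldots,m_l\}$ then $t:=k_l(R)+s\in(k_l(R),k_{l-1}(R)]$; by the monotonicity of $k_k(R)$ in $k$, one has $t\le k_k(R)$ for every $k\le l-1$ and $t>k_k(R)$ for every $k\ge l$. By the construction of $\omega(R)$ on this block, $i_{t,k}=i(l)_k$ for $k\le l-1$, so the lock forces $j_{t,k}=i(l)_k$ for those $k$; the only remaining condition on $j_{t,l}$ is $\textbf{j}_t\in D$, which is precisely the requirement that $j_{t,l}\in\{j_{l,1},\ldots,j_{l,N(i(l)_1,\ldots,i(l)_{l-1})}\}$, i.e. that it is one of the digits of the IFS defining $K_l$. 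For $s>m_l$ the crude bound $j_{k_l(R)+s,l}\le n_l-1$ yields
\[
0\le\sum_{s>m_l}\frac{j_{k_l(R)+s,l}}{n_l^{s}}\le\frac{1}{n_l^{m_l}},
\]
so $T_l^Q(\tau(\omega'))$ equals a level-$m_l$ cylinder endpoint of $K_l$ plus an element of $[0,n_l^{-m_l}]$, which is precisely the description of $K_l^{m_l}=K_l^{k_{l-1}(R)-k_l(R)}$.

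The only real technical point is the bookkeeping in the third paragraph: one must verify, using the strict inequalities $n_{l-1}<n_l$, that the windows $(k_l(R),k_{l-1}(R)]$ are non-empty for all sufficiently small $R$ so that the $\textbf{i}(l)$-blocks of $\omega(R)$ actually occur, and that the chain $k_k(R)$ decreasing in $k$ makes each such block lock exactly the correct $l-1$ coordinates and free the rest. This is precisely the step that the paper flags as failing when the inequalities between the $n_l$ are not strict.
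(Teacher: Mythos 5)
Your proof is correct and takes essentially the same route as the paper: both arguments hinge on the monotonicity $k_1(R)\ge\cdots\ge k_d(R)$ and the block structure of $\omega(R)$ to determine, coordinate by coordinate, which tail digits of $\omega'$ are locked and which are free, which is exactly the identity $T_l^Q\circ\tau_l=\tau_l\circ\sigma^{k_l(R)}$ on $Q$ that the paper uses. The only difference is presentational — you write out the base-$n_l$ tail series explicitly and bound the $s>m_l$ part by $n_l^{-m_l}$, whereas the paper phrases the same bookkeeping as a chain of symbolic set inclusions.
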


\begin{proof}
We have 

\begin{align*}
Q\left( \omega \left( R\right), R \right)
&=\left\{ \omega'=\left( \textbf{j}_1, \textbf{j}_2 , \ldots \right)\in D^{\mathbb{N}} : \forall \, \,  l=1, \ldots, d \text{ and } \forall\, \, t= 1, \ldots, k_l(R) \text{ we have } j_{t,l}=i_{t,l} \right\} \\ \\
&\subseteq \left\{ \textbf{j} = (j_1, j_2 , \dots)   \in (\pi_1D)^{\mathbb{N}} :   \forall\, \, t= 1, \ldots, k_1(R) \text{ we have } j_{t}=i_{t,1} \right\} \\
&\, \qquad  \times \prod_{l=2}^d   \{ \textbf{j} = (j_1, j_2 , \dots)   \in \mathcal{I}_l^{\mathbb{N}} :  \forall\, \, t= 1, \ldots, k_l(R) \text{ we have } j_{t}=i_{t,l} \text{ and } \\
&\, \qquad \qquad \qquad  \forall\, \, t= k_l(R)+1, \ldots,k_{l-1}(R)  \text{ we have }(i(l)_1, \dots, i(l)_{l-1}, j_{t}) \in D_l \}.
\end{align*}
Observe that for each $l=1, \dots, d$,
\[
T^Q_l \circ \tau_l = \tau_l \circ \sigma^{k_l(R)}
\]
on $Q$, i.e., $T^Q_l$ acts symbolically by applying the left shift $k_l(R)$ times.  This yields

\newpage

\begin{align*}
T^Q(\tau(Q))
&\subseteq T^Q_1 \tau_1 \left\{ \textbf{j} = (j_1, j_2 , \dots)   \in (\pi_1D)^{\mathbb{N}} :   \forall\, \, t= 1, \ldots, k_1(R) \text{ we have } j_{t}=i_{t,1} \right\}\\
&\, \qquad  \times \prod_{l=2}^d  T^Q_l \tau_l  \{ \textbf{j} = (j_1, j_2 , \dots)   \in \mathcal{I}_l^{\mathbb{N}} :  \forall\, \, t= 1, \ldots, k_l(R) \text{ we have } j_{t}=i_{t,l} \text{ and }  \\
&\, \qquad \qquad \qquad  \forall \, \, t= k_l(R)+1, \ldots,k_{l-1}(R)  \text{ we have }(i(l)_1, \dots, i(l)_{l-1}, j_{t}) \in D_l \} \\
&=  \tau_1 \left\{ \textbf{j} = (j_1, j_2 , \dots)   \in (\pi_1D)^{\mathbb{N}} \right\} \\
&\, \qquad  \times \prod_{l=2}^d  \tau_l \,   \{ \textbf{j} = (j_1, j_2 , \dots)   \in \mathcal{I}_l^{\mathbb{N}} :  \forall \, \, t= 1, \ldots,k_{l-1}(R) -k_l(R) \\
&\, \qquad \qquad \qquad \qquad \qquad \qquad \qquad   \text{ we have }(i(l)_1, \dots, i(l)_{l-1}, j_{t}) \in D_l \} \\
&= \pi_1 K \times \prod_{l=2}^d K_l^{k_{l-1}(R)-k_{l}(R)}
\end{align*}
as required.
\end{proof}

The product set in Lemma \ref{productform} will act as an intermediary between $T^Q(\tau(Q(\omega(R),R))) $ and $\hat{K}$, in proving that $T^Q(\tau(Q(\omega(R),R))) \to \hat{K}$ in the Hausdorff metric as $R \to 0$.  For this next step we need to think of the geometry of the sets involved.

The product set
\[
\prod_{l=2}^d K_l^{k_{l-1}(R)-k_{l}(R)}
\]
has a natural decomposition into $(d-1)$-dimensional closed hypercuboids which are the products of basic intervals in the pre-fractal construction of each $K_l^{k_{l-1}(R)-k_{l}(R)}$.  In particular, the product decomposes as the union of
\[
M \  = \ \prod_{l=2}^d N(i_1, \dots i_{l-1})^{k_{l-1}(R)-k_{l}(R)}
\]
$(d-1)$-dimensional closed hypercuboids with side lengths $n_l^{k_{l-1}(R)-k_{l}(R)}$ for $l=2, \dots, d$. We note that this decomposition may not be a strict partition, but the interiors of the $(d-1)$-dimensional hypercuboids are pairwise disjoint.  Let this decomposition be labeled as $\{H_i \}_{i=1}^M$ and observe that
\[
\pi_1 K \times \prod_{l=2}^d K_l^{k_{l-1}(R)-k_{l}(R)} \ = \ \pi_1K \times \bigcup_{i=1}^M  H_i \ = \ \bigcup_{i=1}^M \pi_1K \times H_i
\]

Observe that by Lemma \ref{productform} we crucially have
\[
T^Q(\tau(Q)) \ = \  \bigcup_{i=1}^M \pi_1 K \times (H_i \cap T^Q(\tau(Q)))
\]
with each intersection $H_i \cap T^Q(\tau(Q))$ non-empty and, similarly, by the definition of $\hat{K}$
\[
\hat{K} \ = \  \bigcup_{i=1}^M \pi_1 K \times (H_i \cap \hat{K})
\]
with each intersection $H_i \cap \hat{K}$ non-empty.  Therefore, writing $\text{diam}( H_i)$ for the diameter of $H_i$, we have
\begin{align*}
d_{\mathcal{H}}\left( \hat{K},  \ T^Q(\tau(Q(\omega(R),R))) \right)
& \le \max_{i=1, \dots, M}  d_{\mathcal{H}}\left(  \pi_1 K \times (H_i \cap \hat{K}), \   \pi_1 K \times (H_i \cap T^Q(\tau(Q))) \right) \\  
& = \max_{i=1, \dots, M}  d_{\mathcal{H}}\left( H_i \cap \hat{K} , \ H_i \cap T^Q(\tau(Q)) \right) \\ 
& \leq \max_{i=1, \dots, M}  \text{diam}( H_i)  \\ 
& \le \sqrt{d} \max_{l=2, \ldots, d} n_l^{-\left( k_{l-1}(R)-k_{l}(R) \right)}\\
& \rightarrow 0
\end{align*}
as $R \to 0$, since $k_{l-1}(R)-k_{l}(R) \to \infty$.  This again relies on the $n_l$ being strictly increasing.  Consider the sequence of maps $T^Q$ for a sequence of approximate squares $Q=Q(\omega(R),R)$ with $R$ tending to zero.  Observe that we cannot quite conclude that $\hat{K}$ is a very weak tangent to $K$ because
\[
T^Q(K) \cap [0,1]^d \ \supseteq \ T^Q(\tau(Q(\omega(R),R))) \ \to \ \hat{K}
\]
and, strictly speaking, the containment may be strict.  This can happen if parts of neighbouring approximate cubes intersect with the natural hypercuboid containing $\tau(Q(\omega(R),R))$.  However, by a simple compactness and monotonicity argument this is not a problem.  The sequence $T^Q(K) \cap [0,1]^d $ is a sequence of non-empty compact subsets of $[0,1]^d $ and, since the space of non-empty compact subsets of $[0,1]^d$ is itself a compact metric space when equipped with the Hausdorff metric, we may extract a subsequence which converges to a non-empty compact set $E \subseteq [0,1]^d $ which is, by definition, a very weak tangent to $K$.  The containment outlined above implies that $\hat{K} \subseteq E$ by the following elementary lemma, the proof of which we leave to the reader.

\begin{lma}
Let $E_k, \,  F_k \subseteq [0,1]^d$ be  sequences of non-empty compact sets which converge in the Hausdorff metric to compact sets $E$ and $F$ respectively.   If $F_k \subseteq E_k$ for all $k$, then $F \subseteq E$.
\end{lma}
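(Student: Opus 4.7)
The plan is to chase a point through the two Hausdorff convergences. Take an arbitrary $x \in F$; the goal is to produce a sequence of points, one from each $E_k$, converging to $x$, and then use Hausdorff convergence $E_k \to E$ to conclude $x \in E$ by closedness.

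First, I would use the convergence $F_k \to F$ to approximate $x$ by points in the $F_k$. Concretely, since $F \subseteq [F_k]_{\varepsilon_k}$ for some $\varepsilon_k \to 0$, for each $k$ I can pick $x_k \in F_k$ with $|x_k - x| \le \varepsilon_k + 1/k$, so $x_k \to x$. By hypothesis $F_k \subseteq E_k$, hence $x_k \in E_k$ for every $k$.

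Next I would exploit the other convergence. Given any $\varepsilon > 0$, there exists $N$ such that for all $k \ge N$ we have $E_k \subseteq [E]_\varepsilon$; in particular $x_k \in [E]_\varepsilon$. Since $[E]_\varepsilon$ is closed and $x_k \to x$, it follows that $x \in [E]_\varepsilon$. As $\varepsilon > 0$ was arbitrary and $E$ is closed (being compact), we obtain $x \in E$, completing the proof.

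There is no real obstacle; the only mild care needed is to make sure the selection of $x_k$ is performed with the correct direction of the Hausdorff inclusion (approximating points of the limit by points of the approximating sets, which corresponds to $F \subseteq [F_k]_{\varepsilon_k}$), and to appeal to closedness of $E$ and of the thickenings $[E]_\varepsilon$ at the right moments.
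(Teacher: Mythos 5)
Your argument is correct and complete: chasing an arbitrary $x \in F$ through the convergence $F_k \to F$ to obtain $x_k \in F_k \subseteq E_k$ with $x_k \to x$, then using $E_k \subseteq [E]_\varepsilon$ eventually together with closedness of $[E]_\varepsilon$ and of $E$, is exactly the standard proof. The paper leaves this lemma to the reader, so there is no authorial proof to compare against, but yours is the expected one.
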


We are now ready to prove the lower bound for Assouad dimension.

\begin{proof} Standard results on the box dimensions of product sets \cite[Chapter 7]{falconer} and of self-similar sets \cite[Chapter 9]{falconer} imply that
\begin{eqnarray*}
\dim_{\text{B}} \hat{K} &=& \dim_{\text{B}} \pi_1K+\sum_{l=2}^d \dim_{\text{B}} K_l \\ \\&=& \frac{\log N}{\log n_1}+\sum_{l=2}^d \frac{\displaystyle\log\max_{(i_1,\ldots, i_{l-1})\in D_{l-1} } N(i_1, \ldots, i_{l-1})}{\log n_l}.
\end{eqnarray*}

Therefore using Proposition \ref{tangents} and monotonicity of Assouad dimension we obtain
\begin{eqnarray*}
\dim_{\text{A}} K  \  \ge \ \dim_{\text{A}} E  \  \ge \ \dim_{\text{A}} \hat{K} &  \ge &\dim_{\text{B}} \hat{K}  \\ 
 &=&  \frac{\log N}{\log n_1}+\sum_{l=2}^d \frac{\displaystyle\log\max_{(i_1,\ldots, i_{l-1})\in D_{l-1} } N(i_1, \ldots, i_{l-1})}{\log n_l}
\end{eqnarray*}
giving us our required lower bound.
\end{proof}

\subsection{Lower bound for lower dimension} \label{lowerlower}

For the lower bound we closely follow the method used for the upper bound of the Assouad dimension, although there are some notable differences in this case. We start by stating and proving a proposition similar to Proposition \ref{adupmeasure}, which is again inspired by \cite{bylund}.

\begin{prop}\label{lowlowmeasure}
Suppose there exists a Borel probability measure $\nu$ on $D^{\mathbb{N}}$ and constants $C >0$ and $s \geq 0$ such that for any $0< r<R\le1$ and $\omega \in D^{\mathbb{N}}$ we have
\[
\frac{\nu \left( Q(\omega,R)\right)}{\nu \left( Q(\omega,r)\right)}\ge C \left( \frac{R}{r} \right)^s.
\]
Then $\dim_{\text{\emph{L}}} K \ge s$.
\end{prop}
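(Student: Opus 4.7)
The plan is to dualize the argument behind Proposition \ref{adupmeasure}: where that proof used \emph{sup} scaling of $\nu$ to get an upper bound on the number $N_{r,\omega}$ of side-$r$ approximate cubes inside a side-$R$ approximate cube, here the reverse \emph{inf} scaling should give a lower bound on the same quantity, which then translates into a lower bound for $N_r(B(x,R)\cap K)$. Concretely, fix $0<r<R\le 1$ and $x\in K$, and pick any $\omega\in D^{\mathbb{N}}$ with $\tau(\omega)=x$. Since each side of the hypercuboid containing $\tau(Q(\omega,R'))$ has length at most $n_d R'$, choosing $R'=R/(\sqrt{d}\,n_d)$ ensures
\[
\tau(Q(\omega,R'))\cap K \ \subseteq \ B(x,R)\cap K,
\]
so it suffices to bound below the $r$-covering number of the left-hand side.

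Next, decompose $Q(\omega,R')$ into its collection of sub-cubes of side $r$ with representative centres $\{\omega_i\}_{i=1}^{N_{r,\omega}}$, exactly as in the proof of Proposition \ref{adupmeasure}. Since $Q(\omega_i,R')=Q(\omega,R')$, the hypothesis yields
\[
\nu(Q(\omega,R')) \ \ge \ C\left(\frac{R'}{r}\right)^s \nu(Q(\omega_i,r))
\]
for every $i$. On the other hand, additivity (up to boundaries of measure zero, which as in the dual argument are harmless) gives
\[
\nu(Q(\omega,R')) \ \le \ N_{r,\omega}\cdot\max_{i}\nu(Q(\omega_i,r)).
\]
Selecting $i$ to achieve this maximum and combining the two estimates produces
\[
N_{r,\omega} \ \ge \ C\left(\frac{R'}{r}\right)^s \ \gtrsim \ \left(\frac{R}{r}\right)^s,
\]
with the implicit constant depending only on $d$ and $n_d$.

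Finally I need to convert the count $N_{r,\omega}$ into a lower bound for $N_r$. The key geometric fact is that each side of a side-$r$ approximate cube has length in $[r,n_l r)$, hence is at least $r$; consequently any set of diameter at most $r$ can straddle at most one coordinate boundary between side-$r$ approximate cubes in each of the $d$ coordinate directions, so it meets at most $2^d$ distinct side-$r$ approximate cubes. As each of the $N_{r,\omega}$ sub-cubes contains a point of $K$ (take $\tau(\omega_i)$), any cover of $\tau(Q(\omega,R'))\cap K$ by sets of diameter $\le r$ must use at least $N_{r,\omega}/2^d$ elements, which yields
\[
N_r(B(x,R)\cap K) \ \ge \ 2^{-d}N_{r,\omega} \ \gtrsim \ \left(\frac{R}{r}\right)^s
\]
uniformly in $x\in K$, and hence $\dim_{\text{L}} K\ge s$. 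The argument is essentially routine once the Assouad-upper-bound template is in hand; the only nontrivial point requiring care is the geometric conversion between approximate cubes and the actual covering/ball definition, and this is handled by the two standard facts used above (approximate cubes of side $r$ have all sides $\ge r$, and balls of radius $R$ around points of $K$ contain a full approximate cube of side comparable to $R$).
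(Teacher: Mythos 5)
Your proof is correct and follows essentially the same route as the paper: the same decomposition of a side-$R'$ approximate cube into side-$r$ sub-cubes, the same scaling argument to bound $N_{r,\omega}$ below, and the same geometric conversion via the two facts that $B(x,R)$ contains a side-$R/(n_d\sqrt d)$ approximate cube and any diameter-$r$ set meets at most $2^d$ side-$r$ approximate cubes. The only small omission is that your sub-cube decomposition implicitly requires $r < R'$; the complementary range $R/(n_d\sqrt d)\le r<R$ must be handled separately (as the paper does, trivially via $N_r\ge 1$), but this is routine bookkeeping and does not change the argument.
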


\begin{proof}

Given $0< r<R\le1$, we choose an approximate cube $Q(\omega,R)$ and as before we consider the approximate cubes of side length $r$ which are subsets of $Q(\omega,R)$. Let $N_{r,\omega}$ be the number of such approximate cubes (ones with side length $r$) and choose a representative set of centres $\left\{ \omega_i\right\}_{i=1, \ldots, N_{r,\omega}}$, one for each of the smaller approximate cubes.   For all $i=1,\ldots, N_{r,\omega}$ our scaling assumption yields
\[
\nu \left( Q (\omega,R) \right) \ = \ \nu \left( Q(\omega_i,R)\right)   \ \ge \  C \left(  \frac{R}{r} \right)^s  \nu \left( Q(\omega_i,r)\right).
\]
We also have
\[
\nu \left( Q (\omega,R) \right) \ \le \  \sum_{i=1}^{N_{r,\omega}} \nu \left( Q (\omega_i,r) \right)\ \le \  N_{r,\omega} \max_{i=1, \ldots, N_{r,\omega}} \nu \left( Q (\omega_i,r) \right).
\]
Therefore we obtain
\[
N_{r,\omega} \ge C \left( \frac{R}{r}\right)^s
\]
by picking $i$ in the first part to be the same as the one used for the maximum in the second part.  We have shown that every geometric approximate cube of side length $R$ must contain at least $C (R/r)^s$ geometric approximate cubes of side length $r$. Since the intersection of $K$ with any ball $B(x,R)$ centred in $K$ contains an approximate cube of side length $R/(n_d\sqrt{d})$, it must contain at least
\[
C \left( \frac{R/(n_d\sqrt{d})}{r }\right)^s
\]
approximate cubes of side length $r$ for any $r< R/(n_d\sqrt{d})$.  Any open set of diameter $r$ can intersect no more than $2^d$ of these approximate cubes of side length $r$ and so
\[
N_r (B(x,R)\cap K) \ \geq \  \frac{1}{2^d}\,  C\left( \frac{R/(n_d\sqrt{d})}{r }\right)^s  \ = \  \frac{1}{2^d \, n_d^s \, d^{s/2}} \, C\left(\frac{R}{r}\right)^{s}.
\]
To be precise, we should also deal with scales $r$ where $R/(n_d\sqrt{d}) \le r <  R $, but this range is trivial since
\[
N_r (B(x,R)\cap K) \ \geq \  1  \ \geq   \ \frac{1}{n_d^s \, d^{s/2}} \, \left(\frac{R}{r}\right)^{s}
\]
and so $\dim_\text{L} K \ge s$, as required.
\end{proof}

Now we can complete the proof of the lower bound by estimating the measure of approximate cubes as we did when finding the upper bound for Assouad dimension, again letting $\mu$ be the coordinate uniform measure.

\begin{proof} Using the formula for the measure of an approximate cube (\ref{approxcubemeasure}) we have
\begin{align*}
\frac{ \tilde \mu(Q(\omega,R))}{\tilde \mu(Q(\omega,r))}
&=\frac{\prod^d_{l=1} \prod_{j=0}^{k_l(R)-1}p_l(\sigma^j\omega)}{\prod^d_{l=1} \prod_{j=0}^{k_l(r)-1}p_l(\sigma^j\omega)} \\
&=\prod^d_{l=1} \prod_{j=k_l(R)}^{k_l(r)-1}\frac{1}{p_l(\sigma^j\omega)} \\
&\geq \left( \prod_{j=k_1(R)}^{k_1(r)-1} N \right)\left(\prod^d_{l=2} \prod_{j=k_l(R)}^{k_l(r)-1} \min_{(i_1,\ldots,i_{l-1})\in D_{l-1}} N(i_1,\ldots,i_{l-1})\right) \\
&=N^{k_1(r)-k_1(R)}\left( \prod_{l=2}^d \min_{(i_1,\ldots,i_{l-1})\in D_{l-1}} N(i_1,\ldots,i_{l-1})^{k_l(r)-k_l(R)}\right) \\
& \geq N^{\log R/\log n_1-\log r/\log n_1-1}\left(\prod_{l=2}^d \min_{(i_1,\ldots,i_{l-1})\in D_{l-1}} N(i_1,\ldots,i_{l-1})^{\log R/\log n_l-\log r/\log n_l-1}\right) \\
&\ge N^{\log(R/r)/\log n_1}n_1^{-1}\left(\prod_{l=2}^d \displaystyle\min_{(i_1,\ldots,i_{l-1})\in D_{l-1}} N(i_1,\ldots,i_{l-1})^{\log (R/r)/\log n_l} \, n_l^{-1}\right) \\
& = (n_1 \times \cdots \times n_d)^{-1}\left(\frac{R}{r}\right)^{\displaystyle\frac{\log N}{\log n_1}}\left(\prod_{l=2}^d \left( \frac{R}{r}\right)^{\frac{\displaystyle\log \min_{(i_1,\ldots,i_{l-1})\in D_{l-1}} N(i_1, \ldots, i_{l-1})}{\displaystyle\log n_l}}\right) \\
&\ge  n_d^{-d}\left( \frac{R}{r}\right)^{\displaystyle\frac{\log N}{\log n_1}+\sum_{l=2}^d \frac{\displaystyle\log \min_{(i_1,\ldots,i_{l-1})\in D_{l-1}} N(i_1, \ldots, i_{l-1})}{\displaystyle\log n_l}}.
\end{align*}

This estimate combined with Proposition \ref{lowlowmeasure} gives us the required lower bound.
\end{proof}

\subsection{Upper bound for lower dimension} \label{upperlower}

The upper bound for lower dimension is similar to the lower bound for Assouad dimension given in Section \ref{lowerbound}.  However, there is a complication due to the fact that the monotonicity argument given at the end of the proof does not apply.  This is for two reasons: the lower dimension is not monotone, and the inclusion between the `good' product set $\hat{K}$ and the genuine very weak tangent $E$ goes the wrong way for estimating dimension.

We begin by building a very weak tangent to $K$, just as we did in Section \ref{lowerbound}.  Choose $\textbf{i}(l)= (i(l)_1, \ldots, i(l)_d)\in D$ for $l=2, \ldots , d$ to be an element of $D$ which gives us the \emph{minimum} value for $N(i_1, \ldots, i_{l-1})$ and once again if there are multiple choices for such minimising elements, then choose one arbitrarily.  Let $\hat{K}$ be defined as the following product of sets:
\[
\hat{K}=\pi_1 K \times \prod_{l=2}^d K_l
\]
where $K_l$ is the Cantor set (or unit interval or single point) obtained by the IFS 
\[
\left\{S_{l,1}(x)=\frac{x+j_{l,1}}{n_l} \, ,\ldots ,\, S_{ l, N(i_1, \ldots, i_{l-1})}(x)=\frac{x+j_{l,N(i_1, \ldots, i_{l-1})}}{n_l}\right\}
\]
acting on $[0,1]$ such that $j_{l,k}$ is the $l^{\text{th}}$ element of some $\textbf{j}_k \in D$ where for all $k=1, \ldots, N(i_1, \ldots, i_{l-1})$, the first $l-1$ coordinates of $\textbf{j}_k$ coincide with the first $l-1$ coordinates of $\textbf{i}(l)$.  For the moment we assume that $\hat{K} \cap (0,1)^d \neq \emptyset$.

It can be shown using the same argument as in Section \ref{lowerbound} that $K$ has a very weak tangent $E$ which contains $\hat{K}$ as a subset.  Closer inspection of the proof in Section \ref{lowerbound} reveals that points in $E$ which are not in $\hat{K}$ must lie on the boundary of the unit cube.  We record this important fact for later.  Again, it follows from standard results on the box dimensions of product sets \cite[Chapter 7]{falconer} and of self-similar sets \cite[Chapter 9]{falconer}  that
\[
\dim_{\text{B}} \hat{K} \ = \ \dim_{\text{B}} \pi_1K+\sum_{l=2}^d \dim_{\text{B}} K_l \  = \ \frac{\log N}{\log n_1}+\sum_{l=2}^d \frac{\displaystyle\log\min_{(i_1,\ldots, i_{l-1})\in D_{l-1} } N(i_1, \ldots, i_{l-1})}{\log n_l}.
\]

Unfortunately, it is easy to construct examples of sets $F$ with very weak tangents $\hat{F}$ which satisfy $\dim_{\text{L}} F > \dim_{\text{L}} \hat{F}$; the opposite of what we want.  The reason for this is that sets with isolated points have lower dimension zero and it is possible for a very weak tangent to a set to have isolated points even if the original set did not.  However, it turns out that (with mild additional assumptions) $\dim_{\text{L}} F \leq \underline{\dim}_{\text{B}} \hat{F}$, which is sufficient in our setting.

\begin{prop}\label{tangents2}
Let $F\subseteq [0,1]^d$ be compact and suppose that $\hat F$ is a very weak tangent to $F$ in the sense of Proposition \ref{tangents} (with $X=[0,1]^d$). If $\hat{F} \cap (0,1)^d \neq \emptyset$, then $\dim_{\text{\emph{L}}} F \leq \underline{\dim}_{\text{\emph{B}}} \hat{F}$.
\end{prop}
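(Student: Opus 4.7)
The plan is to fix $s > \underline{\dim}_{\text{B}} \hat{F}$ and prove $\dim_{\text{L}} F \leq s$; letting $s$ decrease to $\underline{\dim}_{\text{B}} \hat{F}$ will then give the proposition. By the definition of lower box dimension there is a sequence $r_n \searrow 0$ along which $\hat{F}$ admits a cover by at most $r_n^{-s}$ open sets of diameter $\leq r_n$. The aim is to transport these covers through the bi-Lipschitz maps $T_k^{-1}$ to produce efficient covers of small balls in $F$, which will upper bound the covering counts appearing in the definition of $\dim_{\text{L}} F$.

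Using $\hat{F} \cap (0,1)^d \neq \emptyset$, I would pick $\hat{x} \in \hat{F}$ and $R_0 > 0$ with $B(\hat{x}, R_0) \subseteq (0,1)^d$. By the Hausdorff convergence $T_k(F) \cap [0,1]^d \to \hat{F}$, passing to a subsequence I can select $k = k(n) \to \infty$ so that the Hausdorff distance is at most $r_n$, producing some $x_k \in T_k(F) \cap [0,1]^d$ within $r_n$ of $\hat{x}$; enlarging the $\hat{F}$-cover by $r_n$ then covers $T_k(F) \cap [0,1]^d$ by at most $r_n^{-s}$ sets of diameter $\leq 3 r_n$. Writing $y_k = T_k^{-1}(x_k) \in F$, the bi-Lipschitz estimates for $T_k$ give $B(y_k, R_0/(2 b_k)) \subseteq T_k^{-1}(B(x_k, R_0/2))$, and for $n$ large enough we have $B(x_k, R_0/2) \subseteq [0,1]^d$, so pulling the cover back through $T_k^{-1}$ (which scales diameters by at most $1/a_k$) yields
\[
N_r\!\bigl(F \cap B(y_k, R)\bigr) \ \leq \ r_n^{-s}
\]
with $R = R_0/(2 b_k)$ and $r = 3 r_n/a_k$. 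The interior condition on $\hat{F}$ is essential here: without it the pulled-back ball could fall outside the region where the Hausdorff comparison with $\hat{F}$ is valid, and the count might be polluted by the artificial truncation at the cube boundary. Using $b_k/a_k \leq C_0$ one has $R/r \geq R_0/(6 C_0 r_n)$, and hence
\[
N_r\!\bigl(F \cap B(y_k, R)\bigr) \ \leq \ \bigl(\tfrac{6 C_0}{R_0}\bigr)^{s} (R/r)^{s}
\]
with a constant independent of $n$ and $k$.

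To conclude, I would argue by contradiction: if $\dim_{\text{L}} F > s$, pick $s' \in (s, \dim_{\text{L}} F)$ and constants $C, \rho > 0$ so that $N_r(F \cap B(y, R)) \geq C(R/r)^{s'}$ for all $y \in F$ and $0 < r < R \leq \rho$. Since the $T_k$ are zooming in, $b_k \to \infty$, so $R \to 0$ and in particular $R \leq \rho$ for large $k$; for $n$ sufficiently large we also have $r < R$. Combining the two inequalities forces $(R/r)^{s' - s}$ to stay uniformly bounded, contradicting $R/r \geq R_0/(6 C_0 r_n) \to \infty$. The main obstacle will be the joint choice of $n$ and $k$: one needs the Hausdorff approximation at scale $r_n$ to be accurate, while also requiring $b_k$ large enough to place $R$ below the prescribed $\rho$. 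A secondary subtlety, explicitly flagged just before the proposition, is that lower dimension is not monotone, so the simple monotonicity step used in Section \ref{lowerbound} for Assouad dimension is unavailable and the quantitative transport argument above is genuinely required.
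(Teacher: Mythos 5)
Your argument is the contrapositive of the paper's: the paper fixes $\alpha < \dim_{\text{L}} F$ and shows directly that every small-scale cover of $\hat F$ must be large by pulling it back through $T_k^{-1}$ onto a ball $B(T_k^{-1}(y), b_k^{-1}t)\cap F$ and invoking the lower-dimension bound, whereas you take an efficient cover of $\hat F$ along a sequence $r_n$ witnessing $s>\underline{\dim}_{\text{B}}\hat F$ and pull it back to produce a covering count contradiction. The transport mechanism, the bi-Lipschitz/$C_0$ bookkeeping, and the role of $\hat F\cap(0,1)^d\neq\emptyset$ in keeping the pulled-back ball inside the region controlled by the Hausdorff approximation are all the same as the paper's, so this is essentially the same argument.

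There is one slip: you assert that ``the $T_k$ are zooming in, $b_k\to\infty$,'' but Proposition \ref{tangents} only requires $b_k\geq 1$ and imposes no growth; a constant sequence $T_k = T$ already produces a legitimate very weak tangent. You use $b_k\to\infty$ only to guarantee that $R = R_0/(2b_k)\leq\rho$ for the lower-dimension estimate to apply. This is easily repaired without changing the structure of the argument: fix $s'$, $C$ and $\rho$ first, and only then choose $R_0\leq 2\rho$ with $B(\hat x,R_0)\subseteq(0,1)^d$; since $b_k\geq 1$, this gives $R\leq R_0/2\leq\rho$ unconditionally. (The paper handles the analogous point by the freedom to take $t$ small at the outset.) With that reordering your proof is correct.
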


\begin{proof}
	Let $\alpha< \dim_{\text{L}} F$ be arbitrary.  Let $\hat{y} \in \hat{F} \cap (0,1)^d$ and fix a constant $t>0$ such that $B(\hat{y},2t) \subseteq (0,1)^d$.  Let $r >0$ be small and let $\{U_i\}_i$ be an $r$-cover of $\hat{F}$ by open balls.   Let $k$ be sufficiently large to guarantee that $d_\mathcal{H}(\hat{F}, T_k(F) \cap  [0,1]^d) < \min\{r,t\}$. It follows that there exists $y \in T_k(F) \cap  [0,1]^d$ such that $B(y,t) \subseteq (0,1)^d$ and also that, writing $U_i'$ for the open ball centered at the same point as $U_i$ but with twice the radius,  $\{U'_i\}_i$  is a $2r$-cover of $T_k(F) \cap  [0,1]^d$.  The bi-Lipschitz condition on $T_k$ implies that
\[
B(T_k^{-1}(y) , b_k^{-1} t) \cap F \subseteq T_k^{-1} ( B(y,t) ) \cap F \subseteq T_k^{-1}( [0,1]^d) \cap F = T_k^{-1}( T_k(F) \cap  [0,1]^d)
\]
and that $\{T_k^{-1}(U'_i)\}_i$ is a $2a_k^{-1}r$-cover of every set in the above chain, in particular $B(T_k^{-1}(y) , b_k^{-1} t) \cap F$.    Therefore by the definition of lower dimension the number of sets in the original cover cannot be fewer than
\[
C \, \left( \frac{b_k^{-1}t}{2a_k^{-1}r} \right)^{\alpha} \ \geq  \  C \,  (2C_0)^{-\alpha} t^{\alpha} \, r^{-\alpha}
\]
where $C$ is a constant depending only on $\alpha$ coming straight from the definition of lower dimension.   This demonstrates that $\underline{\dim}_{\text{B}} \hat{F} \geq \alpha$, which proves the result since $\alpha< \dim_{\text{L}} F$ was arbitrary.
\end{proof}

Finally, we have to deal with the fact that $\hat{K}$ is a subset of a very weak tangent $E$ and so the dimension estimate goes the wrong way.  This is easily handled by the following simple lemma.

\begin{lma} \label{subtangent}
Suppose $\hat{F}$ is a very weak tangent to a non-empty compact set $F \subseteq [0,1]^d$ in the sense of Proposition \ref{tangents} (with $X = [0,1]^d$).  Let $X' \subseteq [0,1]^d$ be a closed hypercube which is the image of $[0,1]^d$ under a similarity $S$ and such that $X' \cap \hat{F} \neq \emptyset$.  Then $S^{-1} (X' \cap \hat{F} )$ is also a very weak tangent to $F$.
\end{lma}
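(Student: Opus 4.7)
The plan is to construct a new sequence of bi-Lipschitz maps that realizes $S^{-1}(X' \cap \hat F)$ as a Hausdorff limit. Take $T'_k = S^{-1} \circ T_k$, where $S^{-1}$ is extended from $X'$ to all of $\mathbb{R}^d$ as a similarity; if $\lambda$ denotes the similarity ratio of $S$, then $T'_k$ is bi-Lipschitz with constants $a'_k = a_k/\lambda$ and $b'_k = b_k/\lambda$, so the key ratio $b'_k/a'_k = b_k/a_k$ remains uniformly bounded, as required by Proposition \ref{tangents}. Since $S$ is a bijection $[0,1]^d \to X'$ mapping the complement of $[0,1]^d$ outside $X'$, a short computation yields
\[
T'_k(F) \cap [0,1]^d \ = \ S^{-1}\!\left( T_k(F) \cap X' \right),
\]
so, as $S^{-1}$ is bi-Lipschitz, everything reduces to showing $T_k(F) \cap X' \to \hat F \cap X'$ in the Hausdorff metric (possibly after passing to a subsequence).

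For this convergence I would extract a Hausdorff-convergent subsequence $T_{k_j}(F) \cap X' \to E^*$ using compactness of the hyperspace of non-empty compact subsets of $X'$ under the Hausdorff metric. The intersections are eventually non-empty because $\hat F \cap X' \neq \emptyset$, combined with $T_k(F) \cap [0,1]^d \to \hat F$, forces approximants to enter $X'$ for large $k$ near any interior point of $\hat F \cap X'$. The upper containment $E^* \subseteq \hat F \cap X'$ is then straightforward: any accumulation point of $T_{k_j}(F) \cap X'$ lies in the closed set $X'$ and, being simultaneously an accumulation point of $T_{k_j}(F) \cap [0,1]^d$, lies in $\hat F$.

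The main obstacle is the reverse containment $\hat F \cap X' \subseteq E^*$. For $z \in \hat F$ lying in the interior of $X'$, Hausdorff convergence $T_k(F) \cap [0,1]^d \to \hat F$ supplies approximants $z_k \in T_k(F)$ with $z_k \to z$, and these lie in $X'$ for $k$ large, so this case is routine. The delicate situation is when $z \in \hat F \cap \partial X'$, where approximants of $z$ might stray just outside $X'$; I would address this by exploiting the regular hypercube geometry of $X'$, approximating such boundary points from within $\hat F \cap \text{int}(X')$ via a diagonal argument before invoking the interior case. Once the Hausdorff convergence $T_{k_j}(F) \cap X' \to \hat F \cap X'$ is established, applying $S^{-1}$ and invoking Proposition \ref{tangents} on the subsequence $(T'_{k_j})$ shows that $S^{-1}(X' \cap \hat F)$ is indeed a very weak tangent to $F$.
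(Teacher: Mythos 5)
Your approach --- passing to $T'_k = S^{-1}\circ T_k$ and noting $T'_k(F)\cap[0,1]^d = S^{-1}(T_k(F)\cap X')$ --- is exactly what the paper intends (the paper writes ``$S\circ T_k$'', evidently a slip for $S^{-1}\circ T_k$: $S$ is a contraction, and $S\circ T_k$ would lead towards $S(\hat F)$, not the claimed $S^{-1}(X'\cap\hat F)$). You also correctly isolate the genuine subtlety, that Hausdorff convergence is not stable under intersection with a fixed set, so $T_k(F)\cap X'$ need not converge to $\hat F\cap X'$: approximants of points of $\hat F$ lying on $\partial X'$ may escape $X'$. However, your proposed repair --- approximating boundary points of $\hat F\cap X'$ from within $\hat F\cap\mathrm{int}(X')$ --- fails whenever $\hat F$ has an isolated point (or isolated piece) on $\partial X'$, since such a point is not a limit of points of $\hat F\cap\mathrm{int}(X')$; a one-dimensional instance is $\hat F=\{1/4\}\cup[3/4,1]$ with $X'=[1/4,1]$. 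As written, your argument only produces a very weak tangent of the form $S^{-1}(E^*)$ with $\hat F\cap\mathrm{int}(X')\subseteq E^*\subseteq\hat F\cap X'$, which can be a proper subset of the claimed $S^{-1}(X'\cap\hat F)$.

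The robust fix is to let the similarity depend on $k$: set $\varepsilon_k=d_{\mathcal H}\bigl(T_k(F)\cap[0,1]^d,\hat F\bigr)\to 0$ and choose closed hypercubes $X'_k\subseteq[0,1]^d$ with $X'\subseteq X'_k$, with $X'_k$ containing every point of $[0,1]^d$ within distance $\varepsilon_k$ of $X'$, and with $X'_k\to X'$ in the Hausdorff metric; then take $T''_k=S_k^{-1}\circ T_k$ where $S_k$ is the natural similarity from $[0,1]^d$ onto $X'_k$. Every approximant of a point of $\hat F\cap X'$ now lands in $X'_k$, so $T_k(F)\cap X'_k\to\hat F\cap X'$, and since $S_k\to S$ this gives $T''_k(F)\cap[0,1]^d\to S^{-1}(X'\cap\hat F)$; the lower Lipschitz bound $a_k/\lambda_k\geq 1$ and the distortion ratio $b_k/a_k$ remain admissible. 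To be fair, the paper dispatches this lemma as ``immediate from the definition'' and equally glosses over the boundary issue, and in the paper's application the gap is harmless since only the inclusion $E^*\subseteq\hat F\cap X'$ is used downstream via Proposition \ref{tangents2}; but as a standalone proof of the lemma your interior-approximation step should be replaced by the $k$-dependent perturbation of $S$ (or something equivalent).
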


\begin{proof}
This follows immediately from the definition using the sequence of maps $S \circ T_k$, where $T_k$ is the sequence of bi-Lipschitz maps used in demonstrating that $\hat{F}$ is a very weak tangent.
\end{proof}

We can now complete the proof.   Let $X' \subseteq (0,1)^d$ be a closed hypercube which is the image of $[0,1]^d$ under a similarity $S$ and such that $\underline{\dim}_{\text{B}} S^{-1} (X' \cap E) \  = \ \dim_{\text{B}} \hat{K}$ and $S^{-1} (X' \cap E) \cap (0,1)^d \neq \emptyset$.  We can do this since $E \cap (0,1)^d \  = \  \hat{K} \cap (0,1)^d \neq \emptyset$.  Then it follows from Lemma \ref{subtangent} that $S^{-1} (X' \cap E )$ is a very weak tangent to $K$ and then Proposition \ref{tangents2} yields
\[
\dim_{\text{L}} K \  \le \ \underline{\dim}_{\text{B}} S^{-1} (X' \cap E ) \  = \ \dim_{\text{B}} \hat{K}  \ = \  \frac{\log N}{\log n_1}+\sum_{l=2}^d \frac{\displaystyle\log\min_{(i_1,\ldots, i_{l-1})\in D_{l-1} } N(i_1, \ldots, i_{l-1})}{\log n_l}
\]
as required.

It only remains to deal with the case where $\hat{K} \cap (0,1)^d =  \emptyset$.  This can be dealt with via a simple approximation argument and we just sketch the ideas, leaving the details to the interested reader.  Consider the iterated sets $D^k$ for large $k \in \mathbb{N}$ and associate the natural $k$-fold iteration of the original IFS, which has the same attractor, $K$.  Given any $\varepsilon>0$, one may choose $k$ sufficiently large such that there exists a choice of $\textbf{i}(l)'= (i(l)'_1, \ldots, i(l)'_d)\in D^k$ for $l=2, \ldots , d$ such that the tangent constructed by the method outlined above will have box dimension no more than $\varepsilon$ larger than the target upper bound and also intersects the interior of the unit hypercube.  One then follows the above proof and then lets $\varepsilon$ tend to zero.  Note that the ability to do this is reliant on our assumption that $K$ does not lie in a hyperplane, but we recall that this assumption was made without loss of generality.

One possible way of choosing the $\textbf{i}(l)'$ would be to first choose $\textbf{i}(l) \in D$ as before, i.e., to yield a very weak tangent with optimal dimension, $L$, (but which may lie on the boundary of the unit hypercube) and also choose $\textbf{j} \in D^d$ which corresponds to a hypercuboid which does not touch the boundary of the unit cube (such a point $\textbf{j} \in D^d$ must exist or else $K$ lies in a hyperplane).  Then choose $\textbf{i}(l)' = \textbf{j} \textbf{i}(l)^{k-d}$ for large $k>d$.  A short and pleasant calculation yields that, for example, 
\[
N(i(l)'_1, \ldots, i(l)'_{l-1}) \, \leq \,  N(i(l)_1, \ldots, i(l)_{l-1})^{k-d}  n_l^d
\]
where we abuse notation slightly by also using $N$ for the analogous function for the iterated IFS.  These choices of $\textbf{i}(l)'$ clearly yield a very weak tangent which does not lie on the boundary of $[0,1]^d$ and one may bound its box dimension by

\newpage 

\begin{eqnarray*}
&\,& \hspace{-30mm} \frac{\log N^k}{\log n_1^k} \, + \, \sum_{l=2}^d \frac{\displaystyle\log N(i(l)'_1, \ldots, i(l)'_{l-1})}{\log n_l^k} \\ \\
& \leq& \frac{\log N}{\log n_1} \, + \, \sum_{l=2}^d \left( \frac{\displaystyle (k-d) \log  N(i(l)_1, \ldots, i(l)_{l-1})}{k \log n_l} \, + \, \frac{d}{k} \right) \\ \\
&\leq & L + \frac{d(d-1)}{k}  \\ \\
&\leq & L + \varepsilon 
\end{eqnarray*}
provided $k \geq d(d-1)/\varepsilon$.

\subsection{Sharpness of the coordinate uniform measure} \label{measurethmproof}

This proof follows easily from \cite[Proposition 6.2.1]{sponges}, stated below in our notation.

\begin{prop}[Olsen]
Let $\omega \in D^{\mathbb{N}}$ and $k \in \mathbb{N}$.
\begin{enumerate} 
\item If the VSSC is satisfied, then $B\left( \tau(\omega), 2^{-1}n_1^{-k}\right)\cap K \subseteq \tau \left(Q\left( \omega, n_1^{-k} \right) \right).$ 
\item  $\tau \left(Q\left( \omega, 1/n_1^k \right) \right) \subseteq B\left( \tau(\omega), (n_1+\dots+n_d)n_1^{-k}\right).$ 
\end{enumerate}
\end{prop}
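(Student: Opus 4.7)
The plan is to prove the two containments separately. Part (2) is a direct geometric estimate, while Part (1) is the substantive claim and requires an induction on the coordinate index that leverages the VSSC.

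For Part (2), I would use the explicit hypercuboid description of $\tau(Q(\omega, 1/n_1^k))$ recorded in Section \ref{intronotation}: it sits inside a box aligned with the axes whose side in coordinate $l$ is $n_l^{-k_l(1/n_1^k)}$, bounded above by $n_l \cdot n_1^{-k}$ by the defining inequality for $k_l$. Since $\tau(\omega)$ lies in the same box, the coordinate-wise distance from $\tau(\omega)$ to any point of the approximate cube is at most $n_l \cdot n_1^{-k}$, and summing across $l$ (using that the Euclidean norm is dominated by the $\ell^1$ norm) yields the stated bound $(n_1 + \cdots + n_d) n_1^{-k}$.

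For Part (1), I would proceed by induction on $l$ to show that any $\omega' = (\textbf{j}_1, \textbf{j}_2, \ldots)$ with $\tau(\omega') \in B(\tau(\omega), 2^{-1} n_1^{-k}) \cap K$ satisfies $j_{t,l} = i_{t,l}$ for all $t = 1, \ldots, k_l$, where $k_l = k_l(1/n_1^k)$. The key structural fact, which is exactly where strict monotonicity $n_1 < \cdots < n_d$ enters, is $k_1 \geq k_2 \geq \cdots \geq k_d$. Assuming the conclusion for coordinates $1, \ldots, l-1$, suppose for contradiction that some $t^* \leq k_l$ is minimal with $j_{t^*, l} \neq i_{t^*, l}$. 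Since $t^* \leq k_l \leq k_m$ for every $m < l$, the inductive hypothesis forces $i_{t^*, m} = j_{t^*, m}$ for all $m < l$, so the VSSC is applicable at the symbolic position $t^*$ and yields $|i_{t^*, l} - j_{t^*, l}| \geq 2$. A geometric-series estimate then gives
\[
|\tau_l(\omega) - \tau_l(\omega')| \ \geq \ \frac{2}{n_l^{t^*}} \, - \, \sum_{t > t^*} \frac{n_l - 1}{n_l^t} \ = \ \frac{1}{n_l^{t^*}} \ \geq \ \frac{1}{n_l^{k_l}} \ \geq \ n_1^{-k},
\]
contradicting that the Euclidean distance, and hence any single coordinate distance, is strictly less than $2^{-1} n_1^{-k}$.

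The main obstacle is precisely making sure the VSSC is applicable at the offending position $t^*$: the separation hypothesis requires agreement on all earlier coordinates at the same symbolic index, which is exactly what the induction on $l$, together with the descending chain $k_1 \geq \cdots \geq k_d$, supplies. Without the VSSC, the bound $|i_{t^*, l} - j_{t^*, l}| \geq 2$ would weaken to $\geq 1$ and the leading term would be exactly cancelled by the tail, so the argument is tight and genuinely uses the separation hypothesis. This also illuminates why the result, and hence the reduction from balls to approximate cubes, requires strict inequalities between the $n_l$.
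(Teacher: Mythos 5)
The paper does not prove this proposition; it is quoted verbatim from Olsen (\cite[Proposition 6.2.1]{sponges}) and used as a black box in Section \ref{measurethmproof}, so there is no in-paper proof to compare against. Your argument is mathematically correct and is essentially the argument one would expect behind Olsen's statement: Part (2) is the trivial hypercuboid estimate, and Part (1) is the induction on coordinates combined with the descending chain $k_1(n_1^{-k}) \geq k_2(n_1^{-k}) \geq \cdots \geq k_d(n_1^{-k})$ to ensure the VSSC is applicable at the offending symbolic position, followed by the geometric-series cancellation that reduces the gap of $2/n_l^{t^*}$ to exactly $1/n_l^{t^*} \geq n_1^{-k}$, contradicting the ball radius $2^{-1}n_1^{-k}$.

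One conceptual error worth flagging: you claim that the descending chain $k_1 \geq \cdots \geq k_d$, and hence the proposition, is ``exactly where strict monotonicity $n_1 < \cdots < n_d$ enters,'' and that this is why ``the reduction from balls to approximate cubes requires strict inequalities.'' This is not so. If $m \geq n > 1$ and $r \in (0,1]$, then $k_m(r) > k_n(r)$ would force $m^{k_m(r)} \geq m^{k_n(r)+1} \geq n^{k_n(r)+1} > 1/r$, contradicting $m^{k_m(r)} \leq 1/r$; so $k_m(r) \leq k_n(r)$ already follows from \emph{weak} monotonicity $n_1 \leq \cdots \leq n_d$. Indeed the proposition is taken from Olsen \cite{sponges}, who works in the weakly monotone setting throughout, and the paper explicitly notes that Olsen does not assume the $n_l$ are strictly increasing. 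The strict inequalities are genuinely needed elsewhere in the paper — for the weak-tangent construction of the lower bound for Assouad dimension and the upper bound for lower dimension in Sections \ref{lowerbound} and \ref{upperlower}, where the argument requires $k_{l-1}(R) - k_l(R) \to \infty$ as $R \to 0$ — but not for this ball/cube comparison.
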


Throughout this section we assume the VSSC is satisfied which, among other things, implies that $\tau$ is injective and that $\mu(\tau(Q(\omega,r))) = \tilde \mu (Q(\omega, r))$.  In Sections \ref{upperbound} and \ref{lowerlower} we have already proved that $\mu$ is `sharp' for approximate squares in that for all $\omega\in D^{\mathbb{N}}$ and $0<r<R \leq 1$ we have
\[
n_d^{-d} \left(\frac{R}{r}\right)^{\dim_\text{L}K} \  \leq \ \frac{\mu(\tau (Q(\omega,R)))}{\mu(\tau (Q(\omega,r)))}\  \leq \  n_d^d \left(\frac{R}{r}\right)^{\dim_\text{A}K}.
\]
Fix $x \in K$ and let $\omega\in D^{\mathbb{N}}$ be such that $\tau(\omega) = x$.  Also fix $0<r<R \leq 2^{-1} n_1^{-1}$ and let
\[
R' \ = \ \inf_{ k \in \mathbb{N}}\left\{ 2^{-1}n_1^{-k} \ : \ 2^{-1}n_1^{-k} \geq R \right\}
\]
and
\[
r'  \ = \  \sup_{ k \in \mathbb{N}} \left\{  (n_1+\dots+n_d) n_1^{-k} \ : \  (n_1+\dots+n_d)n_1^{-k} \leq r \right\}
\]
and observe that $R \leq R' < n_1R$ and $n_1^{-1} r < r' \leq r$.  The reason for these definitions is that we may apply Olsen's proposition to the scales $r'$ and $R'$.  In particular we have
\begin{eqnarray*}
\frac{\mu(B(x,R))}{\mu(B(x,r))} \  \le \ \frac{\mu(B(x,R'))}{\mu(B(x,r'))}  &\le& \frac{\mu(\tau(Q(\omega,2R')))}{\mu\left(\tau \left(Q\left(\omega,r'/(n_1+\dots+n_d)\right)\right)\right)} \\ \\
&\leq & n_d^d\left(\frac{2(n_1+\dots+n_d)R'}{r'}\right)^{\dim_\text{A}K} \\ \\
&\leq & n_d^d\, (2(n_1+\dots+n_d) n_1^{2})^{\dim_\text{A}K}\left(\frac{R}{r}\right)^{\dim_\text{A}K}
\end{eqnarray*}
which is sufficient, setting $C_1 =  n_d^d\, (2(n_1+\dots+n_d) n_1^{2})^{\dim_\text{A}K}$.  The lower scaling estimate concerning lower dimension is proved similarly and omitted.

\section{Further discussion and examples} \label{qanda}

In this section we will provide examples to illustrate several interesting aspects of the dimension theory of sponges and we will discuss our results along the way. We will finish with some open questions.

\subsection{Worked example}\label{example}

Here we consider a basic example of a Bedford-McMullen sponge $K$ where $d=3$ as this is easier to visualise. Let $n_1=2$, $n_2=3$, $n_3=4$ and consider the sponge $K$ generated by
\[
D=\left\{ (0,0,0), (0,0,3), (0,1,2), (1,0,2), (1,1,0), (1,1,1), (1,1,2), (1,2,0), (1,2,2), (1,2,3) \right\}.
\]

For this sponge we have $N=2$, 
\[
\begin{aligned}[c]
\max N(i_1)&=3\\
\max N(i_1, i_2)&=3\\
\end{aligned}
\qquad \qquad
\begin{aligned}[c]
\min N(i_1)&=2\\
\min N(i_1, i_2)&=1.\\
\end{aligned}
\]

So, by Theorem \ref{mainthm},
\[
\dim_{\text{A}} K = \frac{\log 2}{\log 2}+ \frac{\log 3}{\log 3}+\frac{\log 3}{\log 4}=2+ \frac{\log 3}{\log 4} \approx 2.792
\]
 and
\[
\dim_\text{L} K = \frac{\log 2}{\log 2}+ \frac{\log 2}{\log 3}+\frac{\log 1}{\log 4}=1+\frac{\log 2}{\log 3} \approx 1.631.
\]
We can also use the formulae for the box and Hausdorff dimensions (discussed in Section \ref{evenmorequestions}), proved by Kenyon and Peres \cite{kenyonperes}, to get
\[
\dim_\text{B} K=\frac{\log 2}{\log 2}+\frac{\log \left(  5/2 \right)}{\log3}+\frac{\log \left( 10/5 \right)}{\log 4}=1+\frac{\log \left(  5/2 \right)}{\log3}+\frac{\log 2}{\log 4}\approx 2.3340
\]
and
\[
\dim_\text{H} K= \log_2 [(2^{\log3/\log 4}+1)^{\log 2/\log 3}+(2\times 3^{\log 3 /\log 4}+1)^{\log 2/\log 3}]\approx 2.296.
\]
We will compare these formulae in Section \ref{evenmorequestions}, for now we simply remark that $\dim_\text{L} K < \dim_\text{H} K < \dim_\text{B} K < \dim_\text{A} K$ holds for this example.

The coordinate uniform measure for this example is given by weights:
\[
\begin{aligned}[c]
p_{(0,0,0)}&=\frac{1}{2 \times 2 \times 2}=\frac{1}{8}\\
p_{(0,0,3)}&=\frac{1}{2 \times  2\times 2}=\frac{1}{8}\\
p_{(0,1,2)}&=\frac{1}{2 \times  2\times 1}=\frac{1}{4}\\
p_{(1,0,2)}&=\frac{1}{2 \times  3\times 1}=\frac{1}{6}\\
p_{(1,1,0)}&=\frac{1}{2 \times  3\times 3}=\frac{1}{18}
\end{aligned}
\qquad \qquad
\begin{aligned}[c]
p_{(1,1,1)}&=\frac{1}{2 \times  3\times 3}=\frac{1}{18}\\
p_{(1,1,2)}&=\frac{1}{2 \times  3\times 3}=\frac{1}{18}\\
p_{(1,2,0)}&=\frac{1}{2 \times  3\times 3}=\frac{1}{18}\\
p_{(1,2,2)}&=\frac{1}{2 \times  3\times 3}=\frac{1}{18}\\
p_{(1,2,3)}&=\frac{1}{2 \times  3\times 3}=\frac{1}{18},
\end{aligned}
\]
and the $\textbf{i}(l)$ used in the proof of the lower bound  for the Assouad dimension can be, for example,
\[
\textbf{i}(2)=(1,1,1)
\]
\[
\textbf{i}(3)=(1,2,2).
\]

Therefore $\hat{K}=[0,1] \times [0,1] \times K_3$ where $K_3$ is the Cantor set defined by the IFS
\[
\left\{ S_{3,1}(x)=\frac{x}{4} , \ S_{3,2}(x)=\frac{x+2}{4} , \ S_{3,3}(x)=\frac{x+3}{4} \right\}.
\]
We recall that $\hat{K}$ is not necessarily a very weak tangent to $K$ but is a subset of a very weak tangent, see the discussion towards the end of Section \ref{lowerbound}. One can find similar examples for the lower dimension, but we leave this to the reader.

\subsection{A self-affine carpet with no doubling Bernoulli measures} \label{examplemeasures}

In this section we provide a simple, self-contained example of a self-affine carpet which does not carry a doubling Bernoulli measure.  Such examples were given previously by Li, Wei and Wen \cite{doublingcarpets}, but we include an example here for completeness.

Let $d=2$, $n_1 = 2$, $n_2=4$ and
\[
D = \{(0,1), (1,1), (1,3)\}.
\]
For a given probability vector $\{p_{(0,1)}, p_{(1,1)}, p_{(1,3)}\}$, let $\mu$ be the associated Bernoulli measure supported on $K$ and $\tilde{\mu}$ the Bernoulli measure on $D^\mathbb{N}$.  Observe that each of the three probabilities must be strictly positive in order to fulfill the requirement that the support of $\mu$ is $K$.  We claim that there is no choice of probabilities for which $\mu$ is doubling.

\begin{figure}[H]
	\centering
		\includegraphics[width = 70mm]{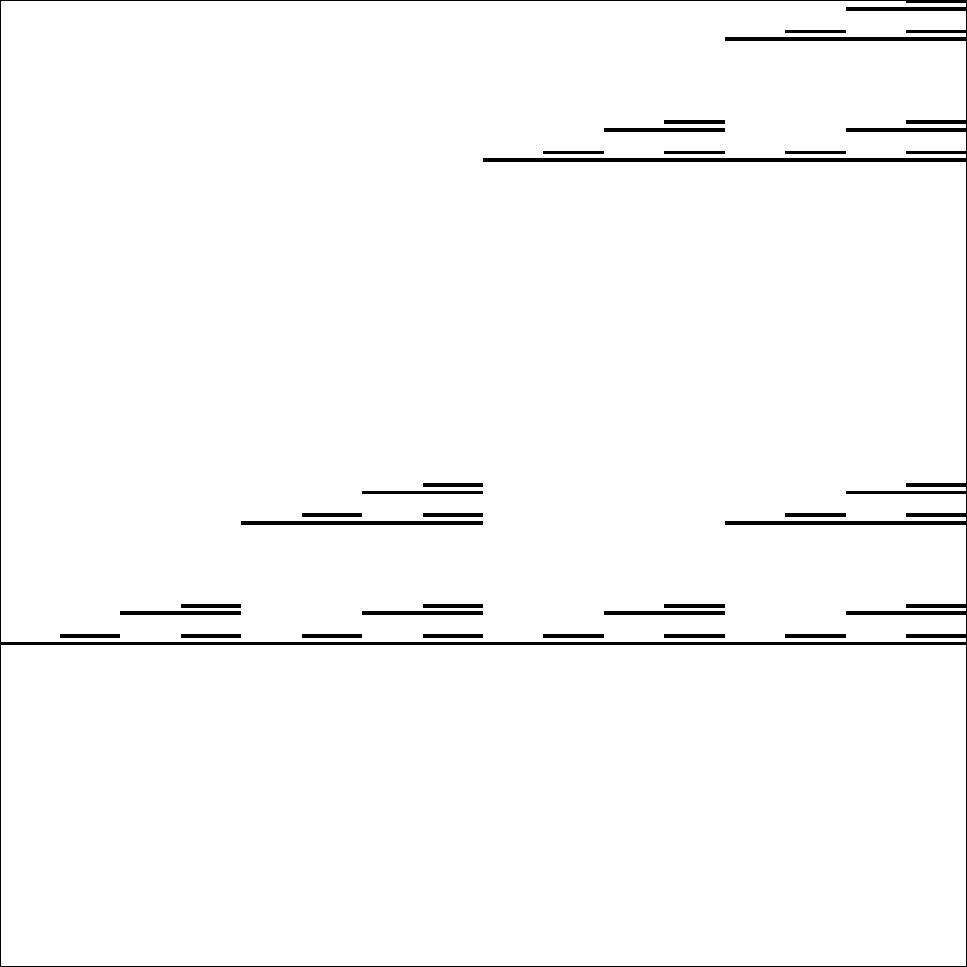}
	\caption[Carpet with no doubling Bernoulli measures]{A self-affine carpet with no doubling Bernoulli measures.}
	\label{fig:nobernoulli}
\end{figure}

Fix a strictly positive probability vector.  Observe that it is sufficient to find a sequence of triples $(\omega, \omega',R) \in D^\mathbb{N} \times D^\mathbb{N} \times (0,1]$ such that $\tau(Q(\omega,R))$ and $\tau(Q(\omega',R))$ do not coincide, but are `neighbours' in in that the natural hypercuboids containing them intersect along an edge, and such that
\[
\frac{\tilde{\mu}(Q(\omega,R))}{\tilde{\mu}(Q(\omega',R))}
\]
 does not remain bounded away from zero and infinity.  To see why this is sufficient observe that for such a triple it is possible to place a point $x \in \tau(Q(\omega,R))$ such that $B(x,R/5) \cap K \subseteq \tau(Q(\omega,R))$  (and with $B(x,R/5)$ lying inside the interior of the natural hypercuboid containing $\tau(Q(\omega,R))$ so that this is the only contribution to its measure) and $B(x,9R) \supseteq \tau(Q(\omega,R)) \cup \tau(Q(\omega',R))$ and similarly with the roles of $\omega$ and $\omega'$ changed.

For large $k \in \mathbb{N}$, let $R=4^{-k}$ and observe that $k_1(R) = 2k$ and $k_2(R) = k$.  First consider
\[
\omega=\left(  \underbrace{(0,1), \ldots, (0,1)}_{k \text{ times}}, (0,1), \underbrace{(1,1), \ldots, (1,1)}_{k -1\text{ times}}, \ldots  \right)
\]
and
\[
\omega'=\left(  \underbrace{(0,1), \ldots, (0,1)}_{k \text{ times}}, (1,1), \underbrace{(0,1), \ldots, (0,1)}_{k-1 \text{ times}}, \ldots  \right).
\]
It is easy to see that the right hand edge of $\tau(Q(\omega,R))$ and the left hand edge of $\tau(Q(\omega',R))$ are in common.  Moreover, using (\ref{approxcubemeasure}) we see that
\[
\frac{\tilde{\mu}(Q(\omega,R))}{\tilde{\mu}(Q(\omega',R))} \ = \  \frac{p_{(0,1)}^{k+1} (p_{(1,1)}+p_{(1,3)})^{k-1}}{p_{(0,1)}^{2k-1} (p_{(1,1)}+p_{(1,3)}) } \ = \  \left(\frac{p_{(1,1)}+p_{(1,3)} }{p_{(0,1)}}\right)^{k-2}.
\]
For this expression to remain bounded away from zero and infinity for all $k$, we must have $p_{(0,1)} = p_{(1,1)}+p_{(1,3)}$, which we will assume from now on.  Now consider the words 

 \[
\omega=\left(  (0,1), \underbrace{(1,1), \ldots, (1,1)}_{2k-1 \text{ times}}, \ldots  \right)
\]
and
\[
\omega'=\left(  (1,1), \underbrace{(0,1), \ldots, (0,1)}_{2k-1 \text{ times}}, \ldots  \right).
\]
Again, it is easy to see that the right hand edge of $\tau(Q(\omega,R))$ and the left hand edge of $\tau(Q(\omega',R))$ are in common.  Moreover, using (\ref{approxcubemeasure}) we see that
\[
\frac{\tilde{\mu}(Q(\omega,R))}{\tilde{\mu}(Q(\omega',R))} \ = \  \frac{p_{(0,1)} (p_{(1,1)}+p_{(1,3)})^{2k-1} \left( \frac{p_{(1,1)}}{p_{(1,1)}+p_{(1,3)}} \right)^{k-1}}{(p_{(1,1)}+p_{(1,3)})p_{(0,1)}^{2k-1}  \left( \frac{p_{(1,1)}}{p_{(1,1)}+p_{(1,3)}} \right)} \ = \  \left(\frac{p_{(1,1)}}{p_{(1,1)}+p_{(1,3)} }\right)^{k-2}.
\]
However, for this expression to remain bounded away from zero and infinity for all $k$, we must have $p_{(1,1)} = p_{(1,1)}+p_{(1,3)}$, which forces $p_{(1,3)} = 0$ which is forbidden.  This completes the proof.

\subsection{A self-affine sponge without strictly increasing $(n_l)$} \label{examplenonstrict}

Now we will calculate the Assouad, lower, Hausdorff and box dimensions of a Bedford-McMullen sponge in $\mathbb{R}^3$ where $n_2=n_3$. We cannot use our formulae for such a set so we will have to use different techniques to find the Assouad and lower dimensions.

Let $n_1=3$ and $n_2=n_3=4$ and consider the sponge $K$ defined by
\[
D=\left\{ (0,0,0), (0,3,0), (0,3,1), (0,3,2), (0,3,3), (2,0,0), (2,3,0), (2,3,1), (2,3,2), (2,3,3)  \right\}.
\]
For this example the sponge can be viewed as the product of the middle 3rd Cantor set $E$ (in the first coordinate) and a self-similar set $F$ generated by 5 maps with similarity ratio $1/4$ (in second and third coordinates). The following figure can help illustrate this.

\begin{figure}[H]
	\centering
		\includegraphics{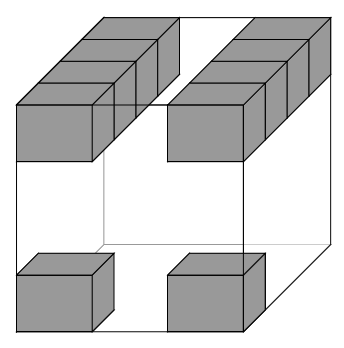}
	\caption[Example sponge]{A sponge for which our formulae do not hold.}
	\label{fig:bad sponge example}
\end{figure}

Since we can write this set as a product of two self-similar sets the dimension of the sponge is equal to the sum of the dimensions of the product sets \cite[Theorem 2.1]{fraser}, i.e.
\[
\dim_\text{A} K= \dim_\text{L} K= \dim_\H K=\dim_\text{B} K= \dim_\text{B} E + \dim_\text{B} F.
\]

So for this example the actual dimensions are
\[
\dim_\text{A} K= \dim_\text{L} K=\frac{\log 2}{\log 3}+\frac{\log 5}{\log 4} \approx 1.792,
\]
but using the formulae we obtain in this paper we would get a value of 
\[ \frac{\log 2}{\log 3}+\frac{\log 2}{\log 4}+\frac{\log 4}{\log 4}\approx 2.131,\] 
for the Assouad dimension and 
\[\frac{\log 2}{\log 3}+\frac{\log 2}{\log 4}+\frac{\log 1}{\log 4}\approx 1.131\]
for the lower dimension, which are incorrect.  Interestingly, the dimension formulae for Hausdorff and box dimension do remain valid even in the case when $n_l=n_{l+1}$ for some $l$, see \cite[Corollary 4.1.2]{sponges}.

This example demonstrates a type of `discontinuity' present for the Assouad and lower dimensions, which is not present for the Hausdorff and box dimensions.  Since the parameter space is discrete for our class of self-affine sponges it is a little misleading to talk about discontinuity, but we observe that a genuine discontinuity in the same spirit is already apparent in the planar case.  We will illustrate this by example.  Fix $\lambda \in (0,1/2]$ and consider the IFS acting on the square consisting of the three maps
\[
T_1(x,y) = (x/2,\lambda y) , \quad T_2(x,y) = (x/2+1/2,\lambda y) , \quad \text{and} \quad T_3(x,y) = (x/2+1/2,\lambda y+1-\lambda).
\]
For $\lambda \in (0,1/2)$, the attractor of this IFS is a self-affine carpet of the type considered by Lalley and Gatzouras \cite{lalley-gatz} and for $\lambda=1/2$ it is the self-similar Sierpi\'nski triangle. To emphasise the dependence on $\lambda$ we denote the attractor by $F_\lambda$.
\begin{figure}[H]
	\centering
		\includegraphics[width=160mm]{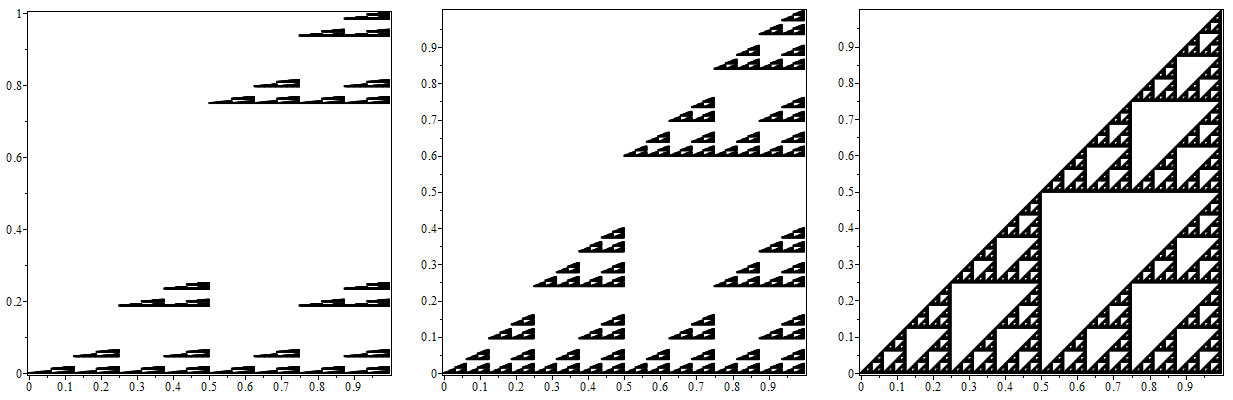}
	\caption[Discontinuity in dimensions]{Plots of $F_\lambda$ for $\lambda$ equal to $1/4$, $2/5$ and $1/2$ respectively.}
	\label{fig:discontinuity}
\end{figure}
Using previously known formulae for the box, Hausdorff, Assouad, and lower dimensions, we obtain the following: for $\lambda \in (0,1/2)$ we have
\[
\dim_{\text{L}} F_\lambda \ = \  1, \qquad \qquad  \dim_\text{H} F_\lambda  \ = \  \frac{\log \left(1+2^{-\log 2/\log \lambda}\right)}{\log 2}
\]
\[
\dim_\text{B} F_\lambda \ = \ 1+\frac{\log (3/2)}{-\log \lambda}, \qquad \qquad \dim_\text{A} F_\lambda \ = \  1+\frac{\log 2}{-\log \lambda}
\]
using results from \cite{lalley-gatz} and \cite{mackay, fraser}; and for $\lambda=1/2$ we have
\[
\dim_\text{A} F_\lambda \ = \  \dim_\text{L} F_\lambda \ = \ \dim_\H F_\lambda \ = \ \dim_\text{B} F_\lambda \ = \  \frac{\log 3}{\log 2}
\]
using the self-similarity of $F_{1/2}$.  One then observes that the function $\lambda \mapsto \dim F_\lambda$ is continuous for the box and Hausdorff dimensions but discontinuous at $\lambda = 1/2$ for Assouad and lower dimensions.  As far as we know this type of discontinuity has not been observed before, although the dimension formulae used above were previously known.
\begin{figure}[H]
	\centering
		\includegraphics[width=90mm]{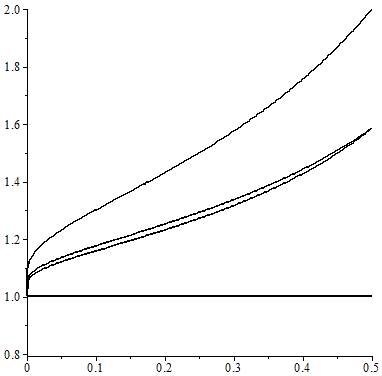}
	\caption[Discontinuity in dimensions]{A plot of $\dim_{\text{L}} F_\lambda$, $\dim_{\text{H}} F_\lambda$, $\dim_{\text{B}} F_\lambda$ and $\dim_{\text{A}} F_\lambda$ for $\lambda \in (0,1/2]$.}
	\label{fig:dimensions}
\end{figure}

\subsection{Extension of Mackay's  dichotomy for dimension} \label{evenmorequestions}

Mackay \cite{mackay} observed a pleasant dichotomy for the dimensions of the planar self-affine carpets.  In particular, he noted that either the Assouad, box and Hausdorff dimensions are all equal or all distinct.  This dichotomy was extended to include the lower dimension by Fraser \cite{fraser}.  In this section we show that this dichotomy extends to sponges.

We say a sponge has \emph{uniform fibres} if for all $l=1, \dots, d-1$ we have
\[
N(i_1, \dots, i_l)  =  N(j_1, \dots, j_l) 
\]
for all $(i_1, \dots, i_l) ,(j_1, \dots, j_l) \in D_l$.  It follows immediately from Theorem \ref{mainthm} that if a sponge has uniform fibres, then the lower, Hausdorff, box and Assouad dimensions are all equal.  Kenyon and Peres \cite[Proposition 1.3 (ii)]{kenyonperes} showed that the box and Hausdorff dimensions coincide if and only if the sponge has uniform fibres.  This means that to prove the extension of Mackay's dichotomy, we must only show that if the fibres are not uniform, then the box and Assouad dimensions are distinct and the lower and Hausdorff dimensions are distinct.

Kenyon's and Peres' \cite{kenyonperes} formula for the  box dimension of a sponge is
\[
\dim_{\text{B}} K \ = \ \frac{\log N}{\log n_1} \ + \  \sum_{l=2}^d \frac{1}{\log n_l} \log \left( \frac{\lvert D_l \rvert}{\lvert D_{l-1} \rvert} \right).
\]
Since, for any $l=2, \dots, d$, we have
\[
\lvert D_l \rvert = \sum_{(i_1, \ldots, i_{l-1})\in D_{l-1}} N(i_1, \ldots, i_{l-1})
\]
we observe that
\[
\frac{\lvert D_l \rvert }{\lvert D_{l-1} \rvert } \ \leq \  \max_{(i_1, \ldots, i_{l-1})\in D_{l-1}} N(i_1, \ldots, i_{l-1}).
\]
Comparing the formulae for the box dimension and the Assouad dimension we see that for the two to be equal we must have equality above for all $l=2, \dots, d$, which can only happen in the uniform fibres case.

The Hausdorff--lower dimension case is slightly more involved, simply due to the more complicated formula for Hausdorff dimension which we take from \cite{kenyonperes}.  This is defined inductively as follows.  For all $(i_1, \dots, i_d) \in D$, let $Z_d(i_1, \dots, i_d) = 1$.  For $(i_1, \dots, i_{l-1}) \in D_{l-1}$, assuming we have defined  $Z_l(i_1, \dots, i_l)$ for all $(i_1, \dots, i_l) \in D_l$ and setting $n_{d+1}=n_d$, we let
\[
Z_{l-1}(i_1, \dots, i_{l-1}) \ = \ \sum_{i_l: (i_1, \dots, i_l) \in D_l} Z_l(i_1, \dots, i_l)^{\log n_l/\log n_{l+1}}
\]
Finally
\[
\dim_\text{H} K \ = \ \frac{\log Z_0}{\log n_{1}}.
\]
In order to compare the Hausdorff dimension and the lower dimension, we define analogous quantities $Z'_l(i_1, \dots, i_l)$ inductively by first setting $Z'_d(i_1, \dots, i_d) = 1$ and then, for $(i_1, \dots, i_{l-1}) \in D_{l-1}$, assuming we have defined  $Z'_l(j_1, \dots, j_l)$ for all $(j_1, \dots, j_l) \in D_l$, by letting
\[
Z'_{l-1}(i_1, \dots, i_{l-1}) \ = \ N(i_1, \dots, i_{l-1}) \min_{(j_1, \dots, j_l) \in D_l} Z'_l(j_1, \dots, j_l)^{\log n_l/\log n_{l+1}}.
\]
It is easily checked that for all $l =1, \dots, d$ and $(i_1, \dots, i_l) \in D_l$ one has $Z_l(i_1, \dots, i_l) \geq Z'_l(i_1, \dots, i_l)$ and, moreover, that
\[
\dim_\text{L} K \ = \ \frac{\log Z'_0}{\log n_{1}}.
\]
Now suppose $K$ does not have uniform fibres.  Then there exists a uniquely defined $t \in \{1, \dots, d-1\}$ such that $N(i_1, \dots, i_t)$ is not constant in $(i_1, \dots, i_t) \in D_t$ but $N(i_1, \dots, i_l)$ \emph{is} constant in $(i_1, \dots, i_l) \in D_l$ for all $l > t$ with $l<d$.  It follows from the above definitions that
\[
Z_l(i_1, \dots, i_l)= Z'_l(i_1, \dots, i_l)
\]
and is constant in $(i_1, \dots, i_l) \in D_l$ provided $l>t$.  We also have
\[
Z_t(i_1, \dots, i_t)= Z'_t(i_1, \dots, i_t)
\]
but this is not constant in $(i_1, \dots, i_t) \in D_t$.  This guarantees that
\[
Z_{t-1}(i_1, \dots, i_{t-1})> Z'_{t-1}(i_1, \dots, i_{t-1})
\]
for some $(i_1, \dots, i_{t-1}) \in D_{t-1}$.  Since the $Z_l(i_1, \dots, i_l)$ are \emph{strictly increasing} as functions of the corresponding $Z_{l+1}(i_1, \dots, i_{l+1})$, this guarantees that $Z_0 > Z_0'$ and therefore that the Hausdorff and lower dimensions are distinct.

\subsection{Open questions} \label{morequestions}

To conclude we will recapitulate some of the open questions that we find particularly interesting. Theorem \ref{measurethm} demonstrates that there are sharp measures supported on sponges in the VSSC case.  Finding sharp measures in the general case is a more complicated problem because the simple class of Bernoulli measures is of no use, see Section \ref{examplemeasures} and \cite{doublingcarpets}.

\begin{ques} Can one always find sharp measures for sponges (or carpets), even when the VSSC is not satisfied? \end{ques}

It is also curious at first sight that our formulae for the Assouad and lower dimensions are only valid when the $n_l$ are strictly increasing, but that the formulae for box and Hausdorff dimensions do remain valid in this case.  The non-strictly increasing case has potential for further investigation.

\begin{ques} What are the Assouad and lower dimensions of a sponge when $n_l=n_{l+1}$ for some $l$? \end{ques}

It is also very natural to consider more general self-affine sets.  We see no significant roadblock in extending our arguments to the natural higher dimensional analogue of Lalley--Gatzouras carpets, assuming the appropriate version of strictly increasing $n_l$.  The extension of the Bara\'nski class could be rather more difficult and we believe there will be several different cases to consider.  Also the carpets of Feng--Wang and Fraser would be interesting to study but as discussed in \cite{fraser} we need to develop new tools which do not rely on approximate squares or cubes in this setting.  Indeed, the Assouad and lower dimensions are not known for these carpets even in the planar case.  We also point out that the box and Hausdorff dimensions are not known in the higher dimensional setting beyond the sponge case considered in this paper and in \cite{kenyonperes, sponges}, so there is plenty to think about here.

\begin{ques} What are the dimensions of the natural higher dimensional analogues of the more general carpets considered by Lalley--Gatzouras \emph{\cite{lalley-gatz}}, Bara\'nski  \emph{\cite{baranski}}, Feng--Wang \emph{\cite{fengaffine}} and Fraser \emph{\cite{fraser_box}} ?
\end{ques}

\begin{centering}

\textbf{Acknowledgements}

\end{centering}

We thank an anonymous referee for making several very useful and detailed comments on the paper, leading to significant improvements in the precision and exposition of our work.

\end{document}